\theoremstyle{definition}
\newtheorem{Def}{Definition}[section]
\newtheorem{ex}[Def]{Example}
\newtheorem{rem}[Def]{Remark}
\theoremstyle{plain}
\newtheorem{prop}[Def]{Proposition}
\newtheorem{thm}[Def]{Theorem}
\newtheorem*{thm*}{Theorem}
\newtheorem{lem}[Def]{Lemma}
\newtheorem{cor}[Def]{Corollary}
\newtheorem*{cor*}{Corollary}
\newtheorem*{con*}{Conjecture}
\newtheorem*{frag*}{Question}
\newtheorem*{verm*}{Vermutung}
\newcommand{\Pic}{\operatorname{Pic}}
\newcommand{\conv}{\operatorname{conv}}
\newcommand{\Ho}{\operatorname{H}} %(co-)homology
\newcommand{\diag}{\operatorname{diag}}
\newcommand{\C}{{\mathbb C}}
\newcommand{\R}{{\mathbb R}}
\newcommand{\pp}{\mathbb{P}}
\newcommand{\N}{{\mathbb N}}
\newcommand{\Z}{{\mathbb Z}}
\newcommand{\F}{{\mathbb F}}
\title{Totally real theta characteristics}
\author{Mario Kummer}
\address{Technische Universit\"at, Berlin, Germany} 
\email{kummer@tu-berlin.de}
\thanks{Keywords: Real algebraic curves; Theta characteristics; Convex algebraic geometry\\Corresponding author:\\ Mario Kummer\\Technische Universit\"at Berlin\\
Institut f\"ur Mathematik\\
Sekretariat MA 3-2\\
Stra\ss{}e des 17. Juni 136\\
D-10623 Berlin\\
Phone: +49-30-314-27602\\
E-Mail:	kummer@tu-berlin.de}
\newcommand{\comment}[1]{}
\begin{document}

\subjclass[2010]{Primary: 14P99, 14H40}

\begin{abstract}
 A totally real theta characteristic of a real curve is a theta characteristic which is linearly equivalent to a sum of only real points. These are closely related to the facets of the convex hull of the canonical embedding of the curve. We give upper and lower bounds for the number of both of totally real theta characteristics and facets.
\end{abstract}
\maketitle

\section{Introduction}
Let $X$ always denote a smooth projective and geometrically irreducible curve over $\R$ and consider its Picard group $\Pic(X)$. We furthermore assume that the real part $X(\R)$ is nonempty. A \textit{theta characteristic} is a divisor class $M\in\Pic(X)$ such that $2M$ is the canonical class. Depending on the parity of $l(M)$ it is called \textit{odd} or \textit{even}. It is classically known that the number of theta characteristics of the complexification $X_\C$ of $X$ is precisely $2^{2g}$ where $g$ is the genus of the curve. Among those there are $2^{g-1}(2^{g}+1)$ even and $2^{g-1}(2^{g}-1)$ odd theta characteristics. The number of real theta characteristics is also determined by the discrete invariants of $X$. Namely, we let $s$ denote the number of connected components of the real points $X(\R)$ and we let $a=1$ if $X(\C)\setminus X(\R)$ is connected and $a=0$ otherwise. In the latter case we say that $X$ is of \textit{dividing type} since the real part divides the complex locus. The number of (real) even and odd theta characteristics in $\Pic(X)$ equals $2^{g-1}(2^{s-1}+1-a)$ and $2^{g-1}(2^{s-1}-1+a)$ respectively \cite[Prop. 5.1]{grossharris}.

Let $D$ be some effective divisor on $X$, say $\sum_{i=1}^r P_i+\sum_{j=1}^t Q_j$ where the $P_i$ have degree one and the $Q_j$ degree two. The $P_i$ correspond to points from $X(\R)$ whereas the $Q_j$ correspond to pairs of complex conjugate points from $X(\C)\setminus X(\R)$. We say that the divisor $D$ is \textit{totally real} if $D$ is the sum of real points only, i.e. $t=0$. We say that a divisor class is totally real if it contains an effective totally real divisor. Note that by \cite[Cor. 2.10]{scheider} every divisor class of sufficiently high degree is totally real. We are however interested in the number of totally real (odd) theta characteristics of $X$. This quantity does not only depend on the discrete invariants $g$, $s$ and $a$ anymore, so our goal is establish some bounds.

If we look at the canonical embedding of $X$ to $\pp^{g-1}$, then effective theta characteristics give rise to \textit{contact hyperplanes}, i.e. hyperplanes that intersect $X$ of even order only. If the theta characteristic is totally real, we get a hyperplane that touches the curve only in real points. These were studied in \cite{emch, corey, kulkarni2017real} for curves of genus four. In \cite[Question 2, page 254]{kulkarni2017real} they ask for lower bounds on the number of such totally real hyperplanes. We establish such bounds that are sharp for $g=3$ and very close to being sharp for $g=4$ according to \cite[Table 1]{kulkarni2017real}.

Now let $C\subset\R^{g-1}$ be a compact algebraic set whose Zariski closure in $\pp^{g-1}$ is a canonical curve $X$ as above such that $X(\R)=C$. We consider the convex hull $K=\conv(C)$ of $C$. Each $(g-2)$-face of $K$ corresponds to a unique totally real theta characteristic $M$ with $l(M)=1$. Conversely, if we have a totally real theta characteristic with exactly one global section, the corresponding hyperplane in $\pp^{g-1}$ will cut out a $(g-2)$-face of the convex hull of $X(\R)$ in a suitable affine chart. Using this correspondence we show that the convex hull $K$ has at most $2^{g-1}$ faces of dimension $g-2$. For $g=4$ this upper bound is sharp and therefore yields an answer to \cite[Question 11, page 254]{kulkarni2017real}. On the other hand we also establish lower bounds on the number of $(g-2)$-faces. Both upper and lower bounds for the number of $(g-2)$-edges are tight in the cases $g=3,4$. Summing up over all essentially different affine charts, we obtain a lower bound for totally real odd theta characteristics as well. For example if $s=g+1$ this lower bound is $\binom{g+1}{g-1}\cdot 2^{g-1}$.

\section{Real curves and their differentials}
\subsection{Real theta characteristics}
This section is to recall some facts on the Jacobian of a real curve, see \cite{grossharris} and \cite{Vin93}.
Let $X$ always be a smooth geometrically irreducible projective real curve of genus $g$ with $X(\R)$ having $s>0$ connected components. The case $X(\R)=\emptyset$ is not interesting for our purposes. The complex conjugation on $X(\C)$ induces an involution on the integral homology $\Ho_1(X(\C),\Z)$ that we denote by $\tau$. After a suitable choice of a symplectic homology basis $A_1,\ldots,A_g,B_1,\ldots,B_g$, the representing matrix of $\tau$ is given by $$T=\begin{pmatrix}I&0\\H&-I\end{pmatrix}$$ where $H$ is a (block)-diagonal matrix of rank $r=g+1-s$. More precisely, if $X$ is of dividing type, then letting $H_0=\begin{pmatrix}0&1\\1&0\end{pmatrix}$ we have $$H=\begin{pmatrix}H_0&&&&&\\&\ddots&&&&\\&&H_0&&&\\&&&0&&\\&&&&\ddots&\\&&&&&0\end{pmatrix}$$ and $H=\begin{pmatrix}I&0\\0&0\end{pmatrix}$ otherwise \cite[Prop. 2.2]{Vin93}, compare \cite{grossharris}. Now we let $\omega_1,\ldots,\omega_g\in\Gamma(X,\Omega_{X/\R})$ be a basis of the space of real differentials such that $\int_{A_j}\omega_i=\delta_{ij}$ for all $1\leq i,j\leq g$. Then the Jacobian variety of $X$ is given by $\C^g/\Lambda$ where $\Lambda$ is the lattice generated by the unit vectors $e_1,\ldots,e_g$ and the columns $z_1,\ldots,z_g$ of the period matrix $Z=(\int_{B_j}\omega_i)$. Using this basis we represent elements of $\C^g/\Lambda$ by column vectors of length $2g$ having real entries. The Abel--Jacobi map gives an isomorphism $\mu: \Pic^0(X_\C)\to\C^g/\Lambda$ and with the above identification we have $\mu(\overline{D})=T^t\mu(D)$. The choice of the symplectic homology basis determines the \textit{Riemann constant} $M\in\Pic^{g-1}(X_\C)$ which is an even theta characteristic (not necessarily real) satisfying $$\mu(\overline{M}-(g-1)P)=\mu(M-(g-1)P)+\frac{1}{2}\binom{\diag(H)}{0}$$ where $P\in X(\R)$ is some real point of $X$ \cite[Prop. 2.2]{Vin93}. This implies that a divisor $N\in\Pic^{g-1}(X_\C)$ is real if and only if $$\mu(N-M)=T^t\mu(N-M)+\frac{1}{2}\binom{\diag(H)}{0}.$$

% $\overline{N}=N \Leftrightarrow \mu(\overline{N}-(g-1)P)=\mu(N-(g-1)P) \Leftrightarrow \mu(\overline{N}-\overline{M})=\mu(N-\overline{M})\Leftrightarrow \mu(\overline{N-M})=\mu(N-M)-\frac{1}{2}\diag(H)$
% % With respect to this basis of $\C^g$ over $\R$ the action of the complex conjugation $\tau$ on $\C^g/\Lambda$ is again given by the matrix $T$. Written in the above basis the Riemann constant $\kappa$ satisfies $\kappa^\tau=\kappa+\frac{1}{2}\diag(H)$ \cite[Prop. 2.2]{Vin93}.

Via subtraction by $M$ one can identify theta characteristics with $2$-torsion points in the Jacobian. Then the theta characteristics of $X_\C$ correspond to the elements $\binom{u}{v}\in\F_2^{2g}$. The even (odd) theta characteristics correspond to those vectors with $u^t v=0$ ($1$ respectively). The above discussion implies that such a theta characteristic is fixed by the involution if and only if $Hv=\diag(H)$. This shows that there are $2^{g+s-1}$ real theta characteristics. If $X$ is of dividing type, then there are $2^{g-1}(2^{s-1}-1)$ real odd and $2^{g-1}(2^{s-1}+1)$ real even theta characteristics. Otherwise there are $2^{g+s-2}$ theta characteristics of each parity, compare \cite[Prop. 5.1]{grossharris}.

\subsection{Real definite differentials}
Here we recall the content of \cite[\S 4]{Vin93}. Let $X_0,\ldots,X_{s-1}$ be the connected components of $X(\R)$. We fix some orientation on $X(\R)$. If $X$ is of dividing type, we choose the \textit{complex orientation}, i.e. the orientation induced on $X(\R)$ by an orientation on one of the connected components of $X(\C)\setminus X(\R)$. This is well-defined up to global reversing. Let $\omega\in\Gamma(X,\Omega_{X/\R})$ and let $p\in X(\R)$ not a zero of $\omega$. Taking a real local coordinate in $p$ that agrees with the chosen orientation on $X(\R)$, we obtain a well-defined sign of $\omega$ at $p$. If $\omega$ is a definite differential, i.e. has all real zeros of even order, then it has constant sign $\sigma_i(\omega)$ on each component $X_i$. The following result characterizes the possible sign patterns a definite differential can have.

\begin{prop}[Last paragraph on page 469 of \cite{Vin93}]\label{prop:vincomp}
 For any choice of signs $\epsilon_1,\ldots,\epsilon_{s-1}\in\{\pm 1\}$ there is a definite differential $\omega\in\Gamma(X,\Omega_{X/\R})$ with $\sigma_0(\omega)=1$ that satisfies $\sigma_i(\omega)=\epsilon_i$ for $i=1,\ldots,s-1$, except when $X$ is of dividing type and all $\epsilon_i=1$. In that case there is no such differential.
\end{prop}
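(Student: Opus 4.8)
The plan is to split the statement into an obstruction (no definite differential exists in the exceptional case) and a construction (every admissible sign pattern is realized), the second being essentially the content of \cite[\S 4]{Vin93}.

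\emph{The obstruction.} Suppose $X$ is of dividing type and equip $X(\R)$ with the complex orientation. Let $X^+$ be one of the two components of $X(\C)\setminus X(\R)$; orienting it by its complex structure, the boundary of $X^+$ is $X(\R)$ with the complex orientation — this is how the complex orientation was defined — so in $\Ho_1(X(\C),\Z)$ one has $\sum_{i=0}^{s-1}[X_i]=0$ (see \cite{Vin93} and \cite{grossharris}). Hence $\sum_{i=0}^{s-1}\int_{X_i}\omega=0$ for every $\omega\in\Gamma(X,\Omega_{X/\R})$. Now if $\omega\neq 0$ is definite, then on each $X_i$ we may write $\omega=g_i(t)\,dt$ in an orientation-compatible local coordinate, with $g_i$ of constant sign $\sigma_i(\omega)$ and $g_i\not\equiv 0$ (a nonzero holomorphic differential on $X_\C$ cannot vanish on an arc of $X(\R)$, by the identity theorem); thus $\int_{X_i}\omega$ is a nonzero real number of sign $\sigma_i(\omega)$. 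Since a sum of finitely many nonzero reals vanishes only if they are not all of one sign, $\sigma_0(\omega)=\cdots=\sigma_{s-1}(\omega)$ is impossible, and in particular there is no definite $\omega$ with all $\sigma_i(\omega)=1$.

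\emph{A convexity reduction for the construction.} For $\epsilon=(\epsilon_0,\dots,\epsilon_{s-1})\in\{\pm1\}^s$ let $C_\epsilon\subseteq\Gamma(X,\Omega_{X/\R})$ consist of $0$ together with all definite $\omega$ with $\sigma_i(\omega)=\epsilon_i$ for every $i$. I claim $C_\epsilon$ is a closed convex cone. Indeed, for $\omega,\omega'\in C_\epsilon$ and any point $p\in X_i$, writing the local densities of $\omega,\omega'$ in an orientation-compatible coordinate as power series with leading coefficients of sign $\epsilon_i$ and of even order, the leading coefficients of $\omega$ and $\omega'$ at $p$ cannot cancel, so $\omega+\omega'$ again has an even-order zero (if any) at $p$ with leading coefficient of sign $\epsilon_i$; hence $\omega+\omega'$ is definite with pattern $\epsilon$, and it is nonzero because its restriction to $X_i$ dominates that of $\omega$. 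Closedness is similar. Thus it suffices to prove $C_\epsilon\neq\{0\}$ for every $\epsilon$ when $X$ is not dividing, and for every non-constant $\epsilon$ when it is.

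\emph{The construction and the main obstacle.} Here I would follow \cite[\S 4]{Vin93}: to an admissible $\epsilon$ associate a real theta characteristic of $X$ — equivalently, by the identification of the previous subsection, a $2$-torsion point $\binom{u}{v}$ with $Hv=\diag(H)$ — and analyze the real theta function of $X$ along the components of $X(\R)$ to show that the corresponding effective canonical divisor has the form $2E+F$ with $E$ effective and real and $F$ supported on $X(\C)\setminus X(\R)$; the holomorphic differential cutting out this divisor is then definite, and the sign of the theta function on $X_i$ works out to $\epsilon_i$. The excluded pattern corresponds precisely to the characteristic for which this degenerates — the theta function vanishing identically on $X$, i.e.\ the differential being $0$ — recovering the obstruction above. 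This last step is the main difficulty: the convexity reduction and the homological obstruction are routine, and it is easy to prescribe the \emph{signs of the periods} $\int_{X_i}\omega$ (the period map $\omega\mapsto(\int_{X_i}\omega)_i$ has image all of $\R^s$, or the hyperplane $\{\sum_i x_i=0\}$ in the dividing case, because a real cycle with vanishing holomorphic periods is nullhomologous), but upgrading such an $\omega$ to a genuinely definite one forces control of the position of the real theta divisor relative to $X(\R)$. As a sanity check, in genus $1$ the space $\Gamma(X,\Omega_{X/\R})$ is one-dimensional, so on an $M$-curve of genus $1$ (necessarily dividing, $s=2$) the only admissible pattern $\epsilon_1=-1$ is realized by $\pm$ the unique holomorphic differential, while $\epsilon_1=1$ is forbidden exactly as above.
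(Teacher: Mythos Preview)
The paper does not supply its own proof of this proposition: it is stated with the attribution \cite{Vin93} and no proof environment follows, so there is nothing in the paper to compare your argument against except the reference itself.

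That said, your treatment of the obstruction is clean and correct: in the dividing case the ovals bound one half of $X(\C)$, so their classes sum to zero in $\Ho_1(X(\C),\Z)$, and a definite differential with all $\sigma_i=+1$ would have strictly positive period over each oval, contradicting $\sum_i\int_{X_i}\omega=0$. The convex-cone observation is also fine and is essentially the same remark the paper uses in the proof of Corollary~\ref{cor:avoidloc} to pass from ``some definite $\omega$'' to ``one with no real zeros''.

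Your construction paragraph, however, is not a proof but an outline of what \cite{Vin93} does, and you say so explicitly (``Here I would follow \cite[\S4]{Vin93}''). The step you flag as the main difficulty---controlling the position of the real theta divisor so that the associated canonical divisor has the shape $2E+F$ with $F$ supported off $X(\R)$---is precisely the substantive content of Vinnikov's argument, and it is not bypassed by the period-sign remark (which, as you note, only prescribes $\int_{X_i}\omega$, not the local sign of $\omega$ along $X_i$). So your write-up sits at the same level of detail as the paper: both defer the existence statement to \cite{Vin93}. If you want a self-contained proof you must actually carry out the theta-function analysis; the convexity reduction, while tidy, does not avoid it.
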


\begin{figure}[h]
 \includegraphics[width=5cm]{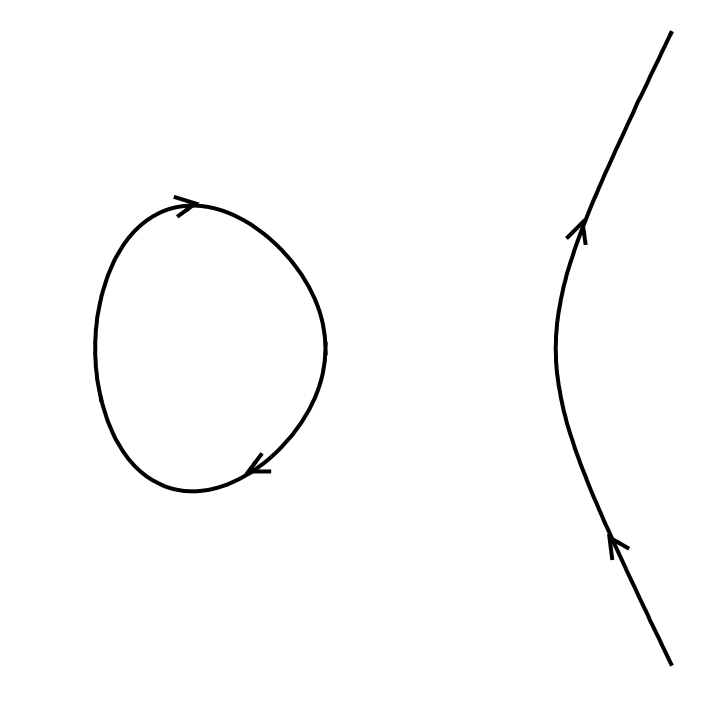} 
\caption{The planar cubic curve defined by $y^2=x^3-x$ and the orientation induced by the differential $\frac{dx}{y}$.  }
\label{fig:elliptic}
\end{figure}

We will need the following slightly refined version of that result stating that we can even find such differentials without any real zeros.

\begin{cor}\label{cor:avoidloc}
 For any choice of signs $\epsilon_1,\ldots,\epsilon_{s-1}\in\{\pm 1\}$ there is a definite differential $\omega\in\Gamma(X,\Omega_{X/\R})$ with $\sigma_0(\omega)=1$ having no real zeros that satisfies $\sigma_i(\omega)=\epsilon_i$ for $i=1,\ldots,s-1$, except when $X$ is of dividing type and all $\epsilon_i=1$. In that case there is no such differential.
\end{cor}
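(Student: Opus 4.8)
The plan is to start from the differential $\omega$ furnished by Proposition \ref{prop:vincomp} and modify it so as to destroy all of its real zeros while keeping it definite with the prescribed sign pattern $(\sigma_0,\sigma_1,\ldots,\sigma_{s-1})=(1,\epsilon_1,\ldots,\epsilon_{s-1})$. The organising remark is that the set $\mathcal C$ of definite differentials with this fixed sign pattern is closed under addition and positive scaling: in an oriented local coordinate on a component $X_i$ the condition $\sigma_i=\epsilon_i$ reads $\epsilon_i\omega\ge 0$, which passes to sums. Moreover, for $\omega_1,\omega_2\in\mathcal C$ and any real point $q\in X_i$, the two local expressions have even order of vanishing at $q$ and leading coefficient of sign $\epsilon_i$, so no cancellation of leading terms occurs and $\operatorname{ord}_q(\omega_1+\omega_2)=\min(\operatorname{ord}_q\omega_1,\operatorname{ord}_q\omega_2)$; hence $\omega_1+\omega_2\in\mathcal C$ and its real zero divisor is the componentwise minimum of those of $\omega_1$ and $\omega_2$. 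Since the real zero divisor of any single element of $\mathcal C$ is finite, there is a $\le$-minimal real zero divisor $D^{\ast}=Z_{\R}(\omega^{\ast})$ attained by some $\omega^{\ast}\in\mathcal C$, and then every element of $\mathcal C$ vanishes along $D^{\ast}$ (apply minimality to $\omega+\omega^{\ast}$). It therefore suffices to prove $D^{\ast}=0$.

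The mechanism for removing real zeros is multiplication by a positive rational function. Write $D^{\ast}=2E^{\ast}$ with $E^{\ast}=\sum_q k_q\, q$ effective (a definite differential has even order at each real point). Suppose there is an $h\in\R(X)$ with a pole of order exactly $k_q$ at each $q\in\Supp(E^{\ast})$ and regular elsewhere. Then $(1+h^2)\,\omega^{\ast}=\omega^{\ast}+h^2\omega^{\ast}$ is a holomorphic differential: near $q$ the term $h^2\omega^{\ast}$ has order $-2k_q+2k_q=0$ with nonzero leading coefficient of sign $\epsilon_{i(q)}$, so $(1+h^2)\omega^{\ast}$ is regular and nonvanishing at $q$; away from $\Supp(E^{\ast})$ one has $1+h^2>0$ on $X(\R)$, so $(1+h^2)\omega^{\ast}$ has exactly the same sign as $\omega^{\ast}$ on each component. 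Hence $(1+h^2)\omega^{\ast}\in\mathcal C$ and has no real zeros at all, contradicting $D^{\ast}\ne 0$. This already proves the corollary whenever such an $h$ exists, e.g.\ whenever $\Supp(E^{\ast})$ is not contained in the base locus of $|\mathcal O(E^{\ast})|$, since a generic element of $H^0(\mathcal O(E^{\ast}))$ then has a pole of full order at every point of $\Supp(E^{\ast})$.

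The main obstacle is the degenerate possibility that no such $h$ exists; this forces $\mathcal C$ to be very small — in the extreme case $\mathcal C=\R_{\ge 0}\,\omega^{\ast}$ with $(\omega^{\ast})=2E^{\ast}$ and $E^{\ast}$ a totally real theta characteristic with $l(E^{\ast})=1$. Ruling this out is exactly where one must use the hypothesis that $(\epsilon_i)$ is not the excluded dividing sign pattern, and I would do so by returning to the construction underlying Proposition \ref{prop:vincomp}: when the sign pattern is admissible the definite differential there varies in a real family of positive dimension, and a generic member of that family avoids any prescribed finite subset of $X(\R)$, in particular $\Supp(D^{\ast})$, contradicting minimality unless $D^{\ast}=0$. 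Equivalently, if every element of $\mathcal C$ vanished at a fixed real point then the theorem of the alternative would produce a positive linear relation, with signs governed by $(\epsilon_i)$, among finitely many real points of the canonical image, and feeding such a relation back into Vinnikov's construction would yield a definite differential with the forbidden sign pattern — impossible. Making this last step precise, and checking that the forbidden case is precisely the one in which it breaks down, is where I expect the real work to lie.
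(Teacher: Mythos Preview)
Your opening observation --- that the cone $\mathcal{C}$ of definite differentials with the fixed sign pattern is closed under addition, and that $\operatorname{ord}_q(\omega_1+\omega_2)=\min(\operatorname{ord}_q\omega_1,\operatorname{ord}_q\omega_2)$ at every real point --- is exactly how the paper begins, and it correctly reduces the statement to showing that the elements of $\mathcal{C}$ have no common real zero.

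The $(1+h^2)\omega^\ast$ detour, however, is a dead end you already half-recognise. You need $h\in\R(X)$ with $(h)_\infty=E^\ast$ \emph{exactly}, and when $l(E^\ast)=1$ (which is the generic situation for a divisor of degree $\le g-1$) no such $h$ exists. So this step does not reduce the problem; you still have to rule out a common real zero by other means, and the multiplication trick contributes nothing to that.

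The actual content sits in your last paragraph, which you leave as an acknowledged gap. The paper fills it directly, without the detour: by \cite[Thm.~3.1]{Vin93} there is a $(g-1)$-dimensional real analytic subvariety $V\subset\Pic^{g-1}(X_\C)$ each of whose points $[D]$ gives a definite differential $\omega_D$ with $(\omega_D)=D+\overline{D}$ and the prescribed sign pattern (this is \cite[Thm.~4.1]{Vin93}). Since a fixed divisor of degree $2g-2$ can be written as $D+\overline{D}$ in only finitely many ways, the map $V\to\pp\Gamma(X,\Omega_{X/\R})$, $[D]\mapsto[\omega_D]$, has finite fibres, so its image is $(g-1)$-dimensional, i.e.\ $\mathcal{C}$ is full-dimensional in $\Gamma(X,\Omega_{X/\R})$. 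A full-dimensional cone of holomorphic differentials cannot have a common zero, and you are done. Your ``theorem of the alternative'' sketch is not needed, and I do not see how to make it precise as stated; the dimension count via Vinnikov's variety is the clean route.
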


\begin{proof}
 Consider two definite differentials $\omega_1,\omega_2\in\Gamma(X,\Omega_{X/\R})$ with the same sign pattern $\epsilon$. Then $\omega=\omega_1+\omega_2$ is also a definite differential with the sign pattern $\epsilon$. Moreover, the real zeros of $\omega$ are exactly the common real zeros of $\omega_1$ and $\omega_2$. Therefore, it suffices to show that there is no point $P_0\in X$ which is a common zero of all definite differentials having the sign pattern $\epsilon$. By \cite[Thm. 3.1]{Vin93} there is a $(g-1)$-dimensional real analytic subvariety $V$ of $\Pic^{g-1}(X_\C)$ with the following property: Each divisor class from $V$ contains an effective divisor $D$ such that $D+\overline{D}$ is the zero divisor of a definite differential $\omega_D\in\Gamma(X,\Omega_{X/\R})$ whose sign pattern agrees with $\epsilon$ \cite[Thm. 4.1]{Vin93}. Now since there are only finitely many possibilities to write the zero divisor of a definite differential as $D+\overline{D}$ for some effective divisor $D$, we find that the set of definite differentials with sign pattern $\epsilon$ must be full dimensional in the real vector space $\Gamma(X,\Omega_{X/\R})$. In particular, they do not have a common zero.
\end{proof}

Finally, the following theorem in combination with the discussion from the previous section allows us to calculate the number of odd theta characteristics giving rise to definite differentials with prescribed sign.

\begin{thm}[Thm. 4.1 and Prop. 4.2 in \cite{Vin93}]
 There are signs $\sigma_1,\ldots,\sigma_{s-1}$ such that the following holds: Let $\omega$ be a real definite differential in $\Gamma(X,\Omega_{X/\R})$ with $\sigma_0(\omega)=1$ and $\textnormal{div}(\omega)=2D$ for some theta characteristic $D$ corresponding to $\binom{u}{v}\in\F_2^{2g}$. Then we have $\sigma_i(\omega)=(-1)^{u_{r+i}}\sigma_i$ for $i=1,\ldots,s-1$ (recall that $r=g+1-s$). If $X$ is of dividing type, then $\sigma_i=1$ for all $i=1,\ldots,s-1$.
\end{thm}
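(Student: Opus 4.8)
The plan is to reduce the assertion to the sign of a single period of $\omega$, and then to evaluate that period with theta functions. Since $\omega$ is definite, its restriction $\omega|_{X_i}$ has the constant sign $\sigma_i(\omega)$ and is not identically zero (a nonzero holomorphic differential cannot vanish on an arc), so $\sigma_i(\omega)=\operatorname{sign}\bigl(\oint_{X_i}\omega\bigr)$ once $X_i$ is given the fixed orientation of $X(\R)$ (the complex orientation when $X$ is of dividing type). In the symplectic homology basis adapted to the real structure that was fixed in Section~2.1 (following \cite{grossharris,Vin93}) one may take the ovals $X_1,\dots,X_{s-1}$ to be homologous, up to orientation, to the cycles $A_{r+1},\dots,A_g$. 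Writing $\omega=\sum_{j=1}^g c_j\omega_j$ in the normalised basis of real differentials with $\oint_{A_j}\omega_i=\delta_{ij}$, we obtain $\sigma_i(\omega)=\pm\operatorname{sign}(c_{r+i})$, where the sign $\pm$ only records how the chosen orientation of $X_i$ compares with that of $A_{r+i}$, and so is independent of $\omega$.

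Write $\epsilon=\binom uv$ and assume first that $l(D)=1$, so that $D$ is odd and is itself the unique effective divisor in $|D|$. Then $\omega$ is a nonzero real multiple of the classical differential $\omega_\epsilon:=\sum_{j}\tfrac{\partial\theta[\epsilon]}{\partial z_j}(0)\,\omega_j$ attached to the odd characteristic $\epsilon$, because the space of holomorphic differentials whose divisor dominates $2D=K$ is one-dimensional and $\omega_\epsilon$ lies in it with divisor exactly $2D$. Choosing $\lambda\in\C\setminus\{0\}$ with all $\lambda^{-1}\tfrac{\partial\theta[\epsilon]}{\partial z_j}(0)$ real (possible since $\omega$ is a real differential), we get $\operatorname{sign}(c_{r+i})=\operatorname{sign}\bigl(\lambda^{-1}\tfrac{\partial\theta[\epsilon]}{\partial z_{r+i}}(0)\bigr)$, so everything reduces to the sign of this theta-nullwert. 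The cases where $l(D)>1$ (so that this derivative vanishes, or $D$ is even) I would dispose of separately, by a limiting argument from the generic case or by Fay's higher-order formulas; I expect that to be routine once the main case is settled.

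Here the reality of $X$ enters decisively: in the adapted basis the period matrix satisfies $\operatorname{Re}(Z)=\tfrac12H$, and the Riemann constant carries the half-characteristic $\tfrac12\binom{\diag(H)}{0}$ recorded in Section~2.1. Feeding this into the quasi-periodicity relation $\theta[\epsilon](z+e_{r+i})=(-1)^{u_{r+i}}\theta[\epsilon](z)$ --- where $e_{r+i}$ is precisely the $A_{r+i}$-period vector --- together with the transformation law of $\theta$ under translation of the characteristic by a $2$-torsion vector, one shows that $\lambda^{-1}\tfrac{\partial\theta[\epsilon]}{\partial z_{r+i}}(0)$ equals $(-1)^{u_{r+i}}$ times a positive real times a sign $\rho_i\in\{\pm1\}$ depending only on $X$ and on the chosen basis and orientations, not on $\epsilon$. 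Together with the first paragraph, and absorbing all $\epsilon$-independent signs into one symbol $\sigma_i$, this gives $\sigma_i(\omega)=(-1)^{u_{r+i}}\sigma_i$. When $X$ is of dividing type the rows of $Z$ indexed by $r+1,\dots,g$ are purely imaginary (there $H$ vanishes), which makes the relevant theta-nullwerte behave as in the $M$-curve case and yields $\sigma_i=1$; this matches Proposition~\ref{prop:vincomp}, which forbids the all-$+1$ pattern, since a real characteristic then has $v_1=\dots=v_r=0$, so $u^tv=\sum_i u_{r+i}v_{r+i}$ can never be $1$ when all $u_{r+i}=0$, i.e.\ no odd $D$ realises the missing pattern.

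The main obstacle is the sign computation in the third paragraph: one must show that the $\epsilon$-dependence of the phase of $\tfrac{\partial\theta[\epsilon]}{\partial z_{r+i}}(0)$ is carried entirely by the single quasi-periodicity character $(-1)^{u_{r+i}}$. The most transparent route seems to be to expand $\omega_\epsilon$ in terms of the prime form, for which the reality relations on a real curve are explicit; the resulting bookkeeping is lengthy but elementary, and the conceptual content is just that the oval $X_i$ realises the homology class $A_{r+i}$, on which $\theta$ transforms by $(-1)^{u_{r+i}}$.
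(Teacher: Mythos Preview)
The paper does not actually prove this theorem: its ``proof'' consists solely of the citation ``This is the content of \cite[Thm.~4.1 and Prop.~4.2]{Vin93}. For curves of dividing type it has been shown previously in \cite{fay}.'' So there is nothing to compare against except Vinnikov's original argument, and your sketch is indeed broadly along those lines: identify $\sigma_i(\omega)$ with the sign of the $A_{r+i}$-period of $\omega$, realise $\omega$ (when $l(D)=1$) as a multiple of the gradient differential $\omega_\epsilon$, and read off the dependence on $\epsilon$ from the theta formalism. The first reduction and the consistency check against Proposition~\ref{prop:vincomp} at the end are fine.

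The genuine gap is exactly where you locate it, but it is more serious than ``lengthy bookkeeping''. The quasi-periodicity relation $\theta[\epsilon](z+e_{r+i})=(-1)^{u_{r+i}}\theta[\epsilon](z)$ compares values at two different points; it says nothing about the \emph{sign} of $\tfrac{\partial\theta[\epsilon]}{\partial z_{r+i}}(0)$ as $\epsilon$ varies. What one actually needs is the reality behaviour of $\theta$ coming from $\operatorname{Re}(Z)=\tfrac12H$, namely that $\overline{\theta[\epsilon](z)}$ equals $\theta[\epsilon](\bar z)$ up to an explicit eighth root of unity depending on $\epsilon$ and $H$; combining this with the relation $\theta[\epsilon](z)=\exp(\pi i a^tZa+2\pi i a^t(z+b))\,\theta(z+Za+b)$ one can isolate the $\epsilon$-dependence of the phase of the gradient at $0$. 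That computation is where the factor $(-1)^{u_{r+i}}$ genuinely appears, and it is not a consequence of the $A$-periodicity alone. In Vinnikov's treatment this is packaged as the identification of the connected components of the real locus of $\Pic^{g-1}$ (the real tori $T_\nu$) with the sign patterns, together with the fact that the Abel--Jacobi image of $D$ lands in the component indexed by $(u_{r+1},\dots,u_g)$; you might find it cleaner to phrase your argument that way rather than via a direct phase computation. The $l(D)>1$ case and the dividing-type normalisation $\sigma_i=1$ also require real work (the latter uses the complex orientation in an essential way), so ``routine'' is optimistic.
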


% \begin{proof}
%  This is the content of \cite[Thm. 4.1 and Prop. 4.2]{Vin93}. For curves of dividing type it has been shown previously in \cite{fay}.
% \end{proof}

\begin{cor}\label{cor:countodd}
 For any choice of signs $\epsilon_1,\ldots,\epsilon_{s-1}\in\{\pm 1\}$ there are $2^{g-1}$ odd theta characteristics $D$ such that a differential $\omega\in\Gamma(X,\Omega_{X/\R})$ with $\textnormal{div}(\omega)=2D$ and $\sigma_0(\omega)=1$ satisfies $\sigma_i(\omega)=\epsilon_i$ for $i=1,\ldots,s-1$, except when $X$ is of dividing type and all $\epsilon_i=1$. In that case there are no such odd theta characteristics.
\end{cor}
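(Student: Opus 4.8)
The plan is to turn the statement into a counting problem for solutions of a linear system over $\F_2$ and solve that system by hand, using the explicit shape of the involution matrix $H$ from Section~2.1 together with the theorem above. First I would set up the dictionary. Let $D$ be a real odd theta characteristic; since $l(D)$ is odd we have $l(D)\ge 1$, so the real vector space of global sections of $\mathcal{O}_X(D)$ is nonzero and $D$ contains a real effective divisor $D_0$. Then $2D_0$ is a real effective canonical divisor, hence $2D_0=\textnormal{div}(\omega)$ for a real differential $\omega$, and $\omega$ is definite because every point occurs in $2D_0$ with even multiplicity; replacing $\omega$ by $-\omega$ if necessary we arrange $\sigma_0(\omega)=1$. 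By the theorem preceding this corollary we have $\sigma_i(\omega)=(-1)^{u_{r+i}}\sigma_i$, where $\binom{u}{v}\in\F_2^{2g}$ is the vector attached to $D$ after subtracting the Riemann constant; in particular this sign pattern depends only on the class $D$, so the condition ``$\sigma_i(\omega)=\epsilon_i$ for all $i$'' is well posed. Since, by Section~2.1, the real odd theta characteristics are exactly the vectors $\binom{u}{v}$ with $Hv=\diag(H)$ and $u^{t}v=1$, the number to be computed is the number of $\binom{u}{v}\in\F_2^{2g}$ satisfying
\begin{enumerate}
\item[(i)] $Hv=\diag(H)$;
\item[(ii)] $u^{t}v=1$;
\item[(iii)] $u_{r+i}=\delta_i$ for $i=1,\dots,s-1$, where $\delta_i\in\F_2$ is defined by $(-1)^{\delta_i}=\epsilon_i\sigma_i$.
\end{enumerate}
Note that $r+(s-1)=g$, so (iii) prescribes precisely the last $s-1$ coordinates of $u$.

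Next I would evaluate this count. If $X$ is not of dividing type, then $H$ has an $r\times r$ identity block and zeros elsewhere, with $r\ge 1$ since a curve with $s=g+1$ is an M-curve and hence of dividing type; thus $\diag(H)$ has $r$ ones followed by zeros, so (i) is equivalent to $v_1=\dots=v_r=1$ with $v_{r+1},\dots,v_g$ free, and together with (iii) the free parameters are $u_1,\dots,u_r$ and $v_{r+1},\dots,v_g$, with $u^{t}v=\sum_{k=1}^{r}u_k+\sum_{i=1}^{s-1}\delta_i v_{r+i}$. For each of the $2^{s-1}$ choices of the free $v$-coordinates exactly $2^{r-1}$ of the $2^{r}$ choices of $u_1,\dots,u_r$ give $u^{t}v=1$, for a total of $2^{s-1}\cdot 2^{r-1}=2^{g-1}$. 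If $X$ is of dividing type, then $\diag(H)=0$ and (i) forces $v_1=\dots=v_r=0$, whence $u^{t}v=\sum_{i=1}^{s-1}\delta_i v_{r+i}$, which does not involve the $2^{r}$ free coordinates $u_1,\dots,u_r$. Since $X$ is dividing we have $\sigma_i=1$, so $\delta=0$ precisely when all $\epsilon_i=1$; in that case $u^{t}v\equiv 0\ne 1$ and there is no solution, which is exactly the claimed exception, while otherwise the functional $v\mapsto\sum_i\delta_i v_{r+i}$ on $\F_2^{s-1}$ is surjective, its fibre over $1$ has $2^{s-2}$ elements, and the count is $2^{r}\cdot 2^{s-2}=2^{g-1}$.

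The $\F_2$-counting is routine; the steps requiring care are those in the first paragraph --- verifying that a definite differential $\omega$ with $\textnormal{div}(\omega)=2D$ and $\sigma_0(\omega)=1$ exists for every real odd theta characteristic and that its sign pattern is an invariant of the class $D$ (so that the ``except'' clause is a genuine obstruction, matching Proposition~\ref{prop:vincomp}) --- together with keeping the indices of $H$, of $v$, and of the $\delta_i$ consistently aligned with the components $X_1,\dots,X_{s-1}$.
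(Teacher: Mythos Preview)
Your proof is correct and follows exactly the approach indicated in the paper, which simply states that the corollary amounts to counting vectors $\binom{u}{v}\in\F_2^{2g}$ with $u^{t}v=1$, $Hv=\diag(H)$, and the last $s-1$ entries of $u$ fixed. You have supplied the details of that count (splitting into the two shapes of $H$) and, in addition, carefully justified the dictionary step---that every real odd theta characteristic actually admits a definite $\omega$ with $\sigma_0(\omega)=1$---which the paper leaves implicit.
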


\begin{proof}
 This is just counting vectors $\binom{u}{v}\in\F_2^{2g}$ such that $u^t v=1$ and $Hv=\diag(H)$ where the last $s-1$ entries of $u$ are fixed.
\end{proof}

% \subsection{Steiner systems}
% For some fixed $0\neq w\in\F_2^{2g}$ consider the union of all pairs $\binom{u}{v},\binom{u'}{v'}\in\F_2^{2g}$ with  $u^tv=(u')^t v'=1$ and $\binom{u}{v}+\binom{u'}{v'}=w$. The set of all corresponding odd theta characteristics under the identification above is called a \textit{Steiner complex}. Steiner complexes have for example been used for writing a ternary quartic as the sum of three squares \cite{Bit11} or to reconstruct a generic space sextic from its tritangent planes \cite{lehavi}. Each Steiner complex contains $2^{g-1}(2^{g-1}-1)$ odd theta characteristics. Thus if $s\geq g$, then we can hope for a Steiner system consisting of only real odd theta characteristics. Indeed, the following proposition shows how this can be carried out.
% 
% \begin{prop}
%  The union of all real odd theta characteristics from $(2^{g-1}-1)$ different components forms a Steiner complex.
% \end{prop}
% 
% \begin{proof}
%  
% \end{proof}

\section{Some convex geometry}

Before we prove our results we need some preparation from convex geometry. Morally speaking, we will prove the $n$-dimensional analog of the folklore fact that a three-legged stool is stable (if designed properly).

\begin{lem}
 Let $K_1,\ldots,K_m\subset\R^n$ be some convex bodies, i.e. compact with nonempty interior. The following are equivalent:
 \begin{enumerate}[(i)]
  \item For each subset $I\subset[m]:=\{1,\ldots,m\}$, the convex hulls $\conv(\bigcup_{i\in I} K_i)$ and $\conv(\bigcup_{i\in [m]\setminus I} K_i)$ are disjoint.
  \item Each choice of points $x_1,\ldots,x_m$ with $x_i\in K_i$ is affinely independent.
 \end{enumerate}
 If $(i)$ and $(ii)$ are satisfied, we say that $K_1,\ldots, K_m$ are \textnormal{strongly separated}.
\end{lem}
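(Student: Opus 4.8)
The plan is to prove the two implications separately, using throughout the standard dictionary between affine (in)dependence of a finite point configuration and ``Radon-type'' bipartitions, together with the elementary description $\conv(K_i : i\in I) = \{\sum_{i\in I}\lambda_i y_i : y_i\in K_i,\ \lambda_i\ge 0,\ \sum_{i\in I}\lambda_i = 1\}$ of the convex hull of a union of convex sets. The dictionary I would record first: a finite tuple $(p_1,\ldots,p_k)$ in $\R^n$ is affinely dependent iff there are reals $\alpha_1,\ldots,\alpha_k$, not all zero, with $\sum_\ell\alpha_\ell p_\ell = 0$ and $\sum_\ell\alpha_\ell = 0$; and from such a relation the index set splits into the nonempty disjoint sets $S = \{\ell : \alpha_\ell > 0\}$ and $T = \{\ell : \alpha_\ell < 0\}$, and the normalized point $t^{-1}\sum_{\ell\in S}\alpha_\ell p_\ell = t^{-1}\sum_{\ell\in T}(-\alpha_\ell)p_\ell$, with $t = \sum_{\ell\in S}\alpha_\ell > 0$, lies in $\conv(p_\ell : \ell\in S)\cap\conv(p_\ell : \ell\in T)$.

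For $(i)\Rightarrow(ii)$ I would argue contrapositively. Suppose some nonzero $x_i\in K_i$ are affinely dependent, and fix a nontrivial relation $\sum_i\alpha_i x_i = 0$, $\sum_i\alpha_i = 0$. Let $I = \{i : \alpha_i > 0\}$ and $J = \{i : \alpha_i < 0\}$, which are nonempty and disjoint. The normalized point above then lies in $\conv(K_i : i\in I)$ via the positive side and in $\conv(K_j : j\in J)\subseteq\conv(K_j : j\notin I)$ via the negative side, so these two convex hulls meet, contradicting $(i)$. Note this direction uses neither that the $x_i$ are nonzero nor that the $K_i$ are full-dimensional.

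For $(ii)\Rightarrow(i)$ I would again use the contrapositive. If $(i)$ fails, pick $I\subseteq[m]$ and $p\in\conv(K_i:i\in I)\cap\conv(K_j:j\notin I)$; necessarily $\emptyset\ne I\ne[m]$ because $\conv(\emptyset) = \emptyset$. Write $p = \sum_{i\in I}\lambda_i y_i = \sum_{j\notin I}\mu_j z_j$ with $y_i\in K_i$, $z_j\in K_j$ and nonnegative coefficients summing to $1$ in each sum, and discard the zero-coefficient terms, so that all $\lambda_i,\mu_j>0$ over nonempty disjoint index sets $I'\subseteq I$ and $J'\subseteq [m]\setminus I$. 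Then $\sum_{i\in I'}\lambda_i y_i - \sum_{j\in J'}\mu_j z_j = 0$ with $\sum_{i\in I'}\lambda_i - \sum_{j\in J'}\mu_j = 0$ and not all coefficients zero, so $\{y_i : i\in I'\}\cup\{z_j : j\in J'\}$ is affinely dependent. Filling in an arbitrary point $x_k\in K_k$ for each remaining $k$, the full tuple $(x_1,\ldots,x_m)$ contains an affinely dependent subtuple, hence is affinely dependent, contradicting $(ii)$ — once we know all its entries can be taken nonzero.

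That last clause is the only subtle point, and it is where the ``body'' hypothesis is invoked: each $K_k$ contains a nonzero point, so the filler points cause no trouble, and whenever the origin lies in none of the $K_i$ — the situation in which the lemma is applied — the $y_i, z_j$ are automatically nonzero and the argument is complete. In general one additionally has to choose the representation of $p$ so that the $y_i, z_j$ are nonzero, which is where full-dimensionality of the $K_i$ (no convex body lies in an affine hyperplane, in particular not in a hyperplane through the origin) is meant to enter; I expect this bookkeeping around the origin, rather than any conceptual difficulty, to be the main thing to get right. One can of course replace the Radon-type argument in $(i)\Rightarrow(ii)$ by the separation theorem — disjoint compact convex sets are strictly separated by an affine hyperplane — but phrasing both directions through affine dependences keeps them symmetric.
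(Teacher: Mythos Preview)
Your argument is essentially the paper's: it records only that $(ii)\Rightarrow(i)$ is ``clear'' and that $(i)\Rightarrow(ii)$ is an immediate corollary of Radon's theorem, and you have simply spelled out both contrapositives using the standard affine-dependence/Radon-partition dictionary.

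Where you go beyond the paper is in flagging the word ``nonzero'' in condition $(ii)$, and you are right that this is the only nontrivial point---but you are wrong to expect it to be bookkeeping that full-dimensionality will absorb. With that qualifier the implication $(ii)\Rightarrow(i)$ is simply false: in $\R^1$ take $K_1=[-1,0]$ and $K_2=[0,1]$. These are convex bodies, and $(i)$ fails since $0\in K_1\cap K_2$; yet any \emph{nonzero} $x_1\in K_1$ and $x_2\in K_2$ satisfy $x_1<0<x_2$, hence are distinct, hence affinely independent, so $(ii)$ holds. No amount of perturbing the representation of the common point can manufacture a nonzero witness here. The word ``nonzero'' is almost certainly a slip in the statement (note also that ``$I\subset[n]$'' should read ``$I\subset[m]$''); it plays no role in the paper's applications, where the $K_i$ are convex hulls of ovals in an affine chart chosen from the avoidance locus. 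Once ``nonzero'' is deleted, both your proof and the paper's one-line proof are complete as written, and your careful discussion of the origin becomes unnecessary.
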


\begin{proof}
We use the same type of argument as it has been used in \cite{Radon}.

%  ``$(ii)\Rightarrow(i)$'': Let $I\subset[n]$ and $x\in\conv(K_i:\,i\in I)\cap\conv(K_j:\,j\not\in I)$. This means we can write $$x=\sum_{i\in I}\sum_{k=1}^{r_i} \lambda_{ik} x_{ik}=\sum_{i\not\in I}\sum_{k=1}^{r_i} \lambda_{ik} x_{ik}$$ where 
 ``$\neg(i)\Rightarrow\neg(ii)$'': If there is an $x\in \conv(\bigcup_{i\in I} K_i)\cap\conv(\bigcup_{i\in [m]\setminus I} K_i)$, then $$\sum_{i\in I}\lambda_i x_i=x=\sum_{i\in[m]\setminus I}\lambda_i x_i$$ for some $x_i\in K_i$ and $\lambda_i\geq0$ such that $$\sum_{i\in I}\lambda_i=1=\sum_{i\in[m]\setminus I}\lambda_i.$$ But then $x_1,\ldots,x_m$ are affinely dependent since $\sum_{i\in I}\lambda_ix_i+\sum_{i\in[m]\setminus I}(-\lambda_i)x_i=0$, $\sum_{i\in I}\lambda_i+\sum_{i\in[m]\setminus I}(-\lambda_i)=0$ and $\lambda_i\neq0$ for at least two indices $i$.
 
``$\neg(ii)\Rightarrow\neg(i)$'': Let $x_1,\ldots,x_m$ be affinely dependent points of $\R^n$ with $x_i\in K_i$. Then $\sum_{i=1}^m\lambda_ix_i=0$ for some $\lambda_i\in\R$ with $\sum_{i=1}^m\lambda_i=0$ and $\lambda_i\neq0$ for at least one index $i$. Let $I$ be the (nonempty) set of all $i\in[m]$ with $\lambda_i>0$, let $\Lambda:=\sum_{i\in I}\lambda_i>0$, and let $x:=\sum_{i\in I}\frac{\lambda_i}{\Lambda}x_i\in\conv(\bigcup_{i\in I} K_i)$. Since we also have that $x=\sum_{i\in[m]\setminus I}\frac{-\lambda_i}{\Lambda}x_i\in\conv(\bigcup_{i\in[m]\setminus I} K_i)$, it follows that $x\in\conv(\bigcup_{i\in I} K_i)\cap\conv(\bigcup_{i\in[m]\setminus I} K_i)\neq\emptyset$. This completes the proof.
\end{proof}

Let $K_1,\ldots,K_n\subset\R^n$ be some strongly separated convex bodies and $v_i\in K_i$ for each $i\in[n]$. We consider the linear polynomial $$l_v=\det\begin{pmatrix} 1&\ldots&1&1\\v_1&\ldots&v_n&x \end{pmatrix}\in\R[x_1,\ldots,x_n], \textrm{ where }v=(v_1,\ldots,v_n).$$

\begin{prop}\label{prop:convex}
 There are some $v_i\in K_i$ for $i\in[n]$ such that $l_v$ is nonnegative (or nonpositive) on every $K_i$. If $w_i\in K_i$ for $i\in[n]$ is another choice of points such that $l_w$ is nonnegative (resp. nonpositive) on every $K_i$, then $l_v=\lambda\cdot l_w$ for some $\lambda>0$.
\end{prop}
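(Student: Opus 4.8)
The plan is to interpret $l_v(x) = \det\begin{pmatrix} 1&\ldots&1&1\\v_1&\ldots&v_n&x\end{pmatrix}$ as (up to a constant) the signed volume of the simplex with vertices $v_1,\ldots,v_n,x$, so that the sign of $l_v$ at a point $x$ records on which side of the affine hyperplane $\aff(v_1,\ldots,v_n)$ the point lies. Since the $K_i$ are strongly separated, any choice of $v_i \in K_i$ is affinely independent by part (ii) of the Lemma, so $l_v$ is a genuine (non-degenerate) linear polynomial and the hyperplane $H_v = \{l_v = 0\}$ is well-defined. For the existence part, I would argue as follows: consider the map $(v_1,\ldots,v_n)\mapsto H_v$ and use a compactness/continuity argument, or more concretely pick the $v_i$ to be the vertices of a maximum-volume simplex $\conv(v_1,\ldots,v_n)$ with $v_i\in K_i$. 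Maximality forces each $K_i$ to lie entirely in the closed slab between the hyperplane through the other $n-1$ vertices that passes through $v_i$ and its parallel — but what we actually need is the one-sided statement, and here strong separation is the crucial input: I claim that for this optimal choice, each $K_j$ lies on one side of $H_v$. Indeed if some $K_j$ met both open sides of $H_v$, one could slide $v_j$ across to strictly increase the volume. The remaining point is that $H_v$ does not cut through the interior of any $K_i$ at all (not just $K_j$): this follows because $v_i\in H_v\cap K_i$ for $i\ne j$... wait — $H_v$ contains $v_1,\ldots,v_n$ except it contains all of $v_1,\dots,v_{n}$? No: $H_v = \aff(v_1,\dots,v_n)$ contains every $v_i$, $i\in[n]$. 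So $H_v$ meets each $K_i$ (at $v_i$) but I must show it is a supporting hyperplane of each $K_i$; here is where strong separation enters decisively, via the disjointness in (i).

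The cleaner route, which I would actually carry out, is the following. Strong separation gives, for each $i$, that $\conv(K_j : j\ne i)$ and $K_i$ are disjoint compact convex sets, hence there is a separating hyperplane; the key geometric fact is that one can choose a \emph{single} hyperplane $H$ that simultaneously has all the $K_i$ weakly on one side and touches each $K_i$. I would build $H$ as follows: take $v_i \in K_i$ minimizing (over all tuples) the "width" functional, or equivalently invoke the following fixed-point style argument — start with any affinely independent $v_i^{(0)}\in K_i$, and repeatedly, for each $i$ in turn, replace $v_i$ by a point of $K_i$ that is extreme in the direction normal to $\aff(v_j : j\ne i)$ (choosing the sign consistently). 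Strong separation guarantees that $\aff(v_j:j\ne i)$ never meets $K_i$, so this extreme point is unique and the normal direction is well-defined; monotonicity of the associated volume shows the process converges to a tuple where each $K_i$ lies weakly on the prescribed side of $H_v = \aff(v_1,\ldots,v_n)$, i.e. $l_v$ has a constant sign on each $K_i$. Replacing $v$ by a reordering flips the sign of $l_v$, which is why both "nonnegative" and "nonpositive" appear in the statement.

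For uniqueness up to a positive scalar: suppose $v_i, w_i \in K_i$ both yield $l_v, l_w$ that are, say, nonnegative on every $K_j$. Each $v_j$ lies in $K_j$, hence $l_w(v_j)\ge 0$; since $l_w$ vanishes exactly on $H_w = \aff(w_1,\ldots,w_n)$, all the points $v_1,\ldots,v_n$ lie in the closed halfspace $\{l_w\ge 0\}$. Symmetrically all $w_i$ lie in $\{l_v\ge 0\}$. Now I want to conclude $H_v = H_w$. Suppose not; then since $v_j \in H_v$ and $l_w(v_j)\ge 0$ for all $j$, and likewise $w_j \in H_w$ with $l_v(w_j)\ge 0$, consider the point $v_j$: it satisfies $l_w(v_j)\ge 0$, and if $l_w(v_j) = 0$ for all $j$ then $H_v \subseteq H_w$ hence $H_v = H_w$ by dimension. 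So some $l_w(v_{j_0}) > 0$. The obstacle is to derive a contradiction from the existence of such a pair of distinct touching hyperplanes; this is the step I expect to be the main difficulty, and I would resolve it again via strong separation (Radon): the $2n$ points $v_1,\dots,v_n,w_1,\dots,w_n$ are not in general position, but grouping them as $n$ pairs $\{v_i,w_i\}\subset K_i$ and using that $\conv(\{v_i,w_i\}: i\in I)$ and $\conv(\{v_j,w_j\}: j\notin I)$ are disjoint for every $I$, one shows that a point of $K_i$ strictly on the positive side of both $H_v$ and $H_w$ cannot coexist with the incidences $v_i\in H_v$, $w_i\in H_w$ unless the two hyperplanes coincide — concretely, parametrize the segment from a configuration realizing $H_v$ to one realizing $H_w$ and track the determinant, which is an affine (hence monotone) function along the segment, forcing $H_v=H_w$ and then $l_v = \lambda l_w$ with $\lambda>0$ since both are nonnegative on the full-dimensional sets $K_i$.
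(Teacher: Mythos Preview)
Your overall strategy differs from the paper's: you attempt a direct optimization/fixed-point construction in $\R^n$, whereas the paper proceeds by induction on $n$, projecting from a point of $K_n$ to reduce to $n-1$ strongly separated bodies in a hyperplane, and iterating the choice of projection center until $K_n$ also ends up on the correct side. The inductive uniqueness in dimension $n-1$ is actually used inside the existence argument in dimension $n$, which is an elegance your approach does not have.

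That said, your proposal as written has genuine gaps.

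\textbf{Existence.} In your iterative scheme you replace $v_i$ by a point of $K_i$ ``extreme in the direction normal to $\aff(v_j:j\ne i)$.'' But $\aff(v_j:j\ne i)$ is an $(n-2)$-flat in $\R^n$; its orthogonal complement is two-dimensional, so ``the normal direction'' is not well-defined, and neither is the sign you want to be ``consistent'' about. Even granting some choice here, you assert that monotonicity of the simplex volume forces convergence and that the limit has the supporting property; neither step is argued. A coordinate-descent of this kind can oscillate, and increasing volume does not by itself imply that the limiting hyperplane supports every $K_i$ from the same side. (Your first instinct, maximizing the simplex volume, runs into the same issue: the first-order condition at a maximizer controls how each $K_i$ sits relative to the \emph{opposite facet} of the simplex, not relative to the common hyperplane $H_v=\aff(v_1,\dots,v_n)$.)

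\textbf{Uniqueness.} The key step of your argument is incorrect as stated. You propose to interpolate $v_i(t)=(1-t)v_i+tw_i$ and ``track the determinant, which is an affine (hence monotone) function along the segment.'' But $l_{v(t)}(x)$ depends on $t$ through $n$ columns of the determinant and is therefore a polynomial of degree $n$ in $t$, not affine; no monotonicity follows. The paper avoids this by moving only the $n$-th point along the segment from $v_n$ to $w_n$; for each intermediate point it invokes the inductive uniqueness (after projecting) to produce a canonical hyperplane, and then runs an intermediate-value argument on the single scalar $l_{u_t}(v_n)$ to force a coincidence of hyperplanes, leading to the contradiction $l_v(w_n)=0$.

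In short, the inductive structure is doing real work in the paper's proof, both for existence and for uniqueness, and your direct approach would need substantially more detail (and a corrected interpolation argument) to go through.
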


\begin{proof}
 We will only show the claims regarding nonnegativity. The proof for nonpositivity is almost verbatim the same.

 We proceed by induction on $n$. The claim is clear for $n=1$, so we consider $n>1$. To any point $u_n\in K_n$ we will associate a linear polynomial in the following way. By the Hahn--Banach separation theorem, there exists an affine hyperplane $H$ in $\R^n$ which separates $K_n$ and $\conv(\bigcup_{i=1}^{n-1}K_i)$. The linear projection $\pi$ from $u_n$ to $H$ maps $K_1,\ldots,K_{n-1}$ to the convex bodies $\pi(K_1),\ldots,\pi(K_{n-1})$ which are strongly separated.
 By induction hypothesis there are such $u_i'\in\pi(K_i)$ that lift to some $u_i\in K_i$ and the corresponding linear polynomial $l_u$ is nonnegative on $K_1,\ldots,K_{n-1}$. We are therefore done if we can choose $u_n\in K_n$ such that $l_u$ is nonnegative on $K_n$. To that end we define the sets $V(u_n)=\{x\in K_n:\,l_u(x)\geq0\}$ and $V^\mathrm{o}(u_n)=\{x\in K^\mathrm{o}_n:\,l_u(x)>0\}$ respectively where $K^\mathrm{o}_n$ denotes the (nonempty) interior of $K_n$ in $\mathbb{R}^n$. Now let $\tilde{u}_n\in K_n$ be another point. As above we obtain the linear polynomial $l_{\tilde{u}}$ which is nonnegative on $K_1,\ldots,K_{n-1}$. If $l_u(\tilde{u}_n)=0$, then $l_{\tilde{u}}=\lambda l_u$ for some $\lambda>0$. Otherwise, by the uniqueness of the affine hyperplane $l_{\tilde{u}}=0$, there is no point $x\in K_n$ where both $l_u$ and $l_{\tilde{u}}$ vanish. This implies that $V(\tilde{u}_n)\subset V(u_n)$ if and only if $\tilde{u}_n \in V(u_n)$ and $V^\mathrm{o}(\tilde{u}_n)\subsetneq V^\mathrm{o}({u}_n)$ if and only if $\tilde{u}_n \in V^\mathrm{o}(u_n)$. One consequence is that $\{V^\mathrm{o}(x)\}_{x\in K^\mathrm{o}_n}$ is an open cover of $K^\mathrm{o}_n$. By Lindel\"of's lemma, there exists a sequence $\{x_m\}_{m\in\N}$ in $K_n^\mathrm{o}$ such that $\{V^\mathrm{o}(x_m)\}_{m\in\N}$ is again a cover of $K_n^\mathrm{o}$. After passing to a suitable subsequence, we can furthermore assume that $V^\mathrm{o}(x_m)\subsetneq V^\mathrm{o}(x_{m'})$ if $m<m'$ and that the sequence $(x_m)_{m\in\N}$ converges to some $v_n\in K_n$. We claim that $V(v_n)=K_n$ which gives our desired linear polynomial $l_v$. Indeed, if $V(v_n)\subsetneq K_n$, then there is a $w_n\in K^\mathrm{o}_n \setminus V(v_n)$ because the closure of $K^\mathrm{o}_n$ is $K_n$ and $V(v_n)$ is closed. Then, since $w_n\in V^\mathrm{o}(x_m)$ for sufficiently large $m$, we have $x_m\not\in V^\mathrm{o}(w_n)$ and $v_n\in V^\mathrm{o}(w_n)$ for sufficiently large $m$. This contradicts the fact that $v_n$ is the limit of the $x_m$.
 
%  If this is not the case, then we replace $u_n$ by some point $\tilde{u}_n\in K_n$ where $l_u$ is strictly negative. As above we obtain the linear polynomial $l_{\tilde{u}}$ which is nonnegative on $K_1,\ldots,K_{n-1}$. Furthermore, by the uniqueness of the affine hyperplane $l_{\tilde{u}}=0$, we get that there is no point $x\in K_n$ where both $l_u$ and $l_{\tilde{u}}$ vanish. This implies that $$\{x\in K_n:\,l_{\tilde{u}}(x)\geq0\}\subsetneq\{x\in K_n:\,l_{{u}}(x)\geq0\}.$$Thus if we repeat this process, then in the limit we will obtain $v_i\in K_i$ as desired.
 
 Now let $w_i\in K_i$ for $i\in[n]$ be another choice of points such that $l_w$ is nonnegative on every $K_i$ and assume that the hyperplanes defined by $l_v$ and $l_w$ are different. It follows from the induction hypothesis that neither $l_v$ nor $l_w$ vanish on the line spanned by $v_n$ and $w_n$. For every $t\in]0,1[$ we consider $u_{n,t}=(1-t)\cdot v_n+t\cdot w_n$ and the linear polynomial $l_{u_t}$ obtained as above. Since $l_v(w_n)>0$, we also have $l_{u_t}(w_n)>0$ for small values of $t>0$. But since $l_{u_t}$ has a zero on the line segment between $v_n$ and $w_n$, we have $l_{u_t}(v_n)<0$ for small $t>0$. On the other hand we have $l_{u_t}(v_n)>0$ for $t$ close to $1$ since $l_{w}(v_n)>0$. Therefore, there is a $t_0\in]0,1[$ such that $l_{u_{t_0}}(v_n)=0$. But then the induction hypothesis implies that $l_v=\lambda\cdot l_{u_{t_0}}$ for some $\lambda>0$.  Thus $l_v$ vanishes at $v_n$ and at $u_{n,t_0}$. This implies that $l_v$ also vanishes at $w_n$ since $w_n$ lies on the line spanned by $v_n$ and $u_{n,t_0}$. But this we have already excluded above. Thus the hyperplanes defined by $l_v$ and $l_w$ are not different.
\end{proof}

\begin{cor}\label{cor:thx}
 The convex hull of strongly separated $K_1,\ldots,K_n\subset\R^n$ has at least two faces of dimension $n-1$.
\end{cor}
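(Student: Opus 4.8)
The plan is to extract two $(n-1)$-dimensional faces of $K:=\conv(K_1\cup\cdots\cup K_n)$ directly from the two halves of Proposition~\ref{prop:convex}, and then to verify that they are genuinely different. First I would fix, using Proposition~\ref{prop:convex}, points $v_i\in K_i$ with $l_v\geq 0$ on every $K_i$ and points $w_i\in K_i$ with $l_w\leq 0$ on every $K_i$; then $l_v\geq 0$ and $l_w\leq 0$ on all of $K$. Since $l_v(v_i)$ is a determinant with a repeated column (namely $\binom{1}{v_i}$ occurs twice) it vanishes, so $H_v:=\{l_v=0\}$ is a supporting hyperplane of $K$ and $F_v:=K\cap H_v$ is a face of $K$ containing $v_1,\ldots,v_n$. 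These points are affinely independent by strong separation, so $\dim F_v\geq n-1$, and since $F_v$ lies in the hyperplane $H_v$ we get $\dim F_v=n-1$. In the same way $F_w:=K\cap H_w$, with $H_w:=\{l_w=0\}$, is an $(n-1)$-dimensional face of $K$. This part is routine bookkeeping.

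The crux is to show $F_v\neq F_w$. For this it suffices to show that the hyperplanes $H_v$ and $H_w$ are distinct, for then $F_v$ and $F_w$ cannot coincide: a common value would be contained in $H_v\cap H_w$, a set of dimension at most $n-2$. I would argue by contradiction, so suppose $H_v=H_w=:H$. Then $v_i$ and $w_i$ both lie in the convex set $K_i\cap H$, whence the segment $p_i(t):=(1-t)v_i+tw_i$ stays inside $K_i\cap H$ for $t\in[0,1]$. I would then choose a point $x_0\in K_n\setminus H$ (possible because $K_n$ has nonempty interior and hence is not contained in $H$) and consider $g(t):=l_{p(t)}(x_0)$, the determinant with columns $\binom{1}{p_1(t)},\ldots,\binom{1}{p_n(t)},\binom{1}{x_0}$, which is a polynomial in $t$. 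The decisive use of strong separation is that for every $t$ the points $p_1(t),\ldots,p_n(t)$ are affinely independent and, lying in $H$, span $H$; since $x_0\notin H$, the points $p_1(t),\ldots,p_n(t),x_0$ are affinely independent, so $g(t)\neq 0$. Hence $g$ has constant sign on $[0,1]$. But $g(0)=l_v(x_0)>0$, because $l_v\geq 0$ on $K_n$ and $x_0\notin H_v$, while $g(1)=l_w(x_0)<0$, because $l_w\leq 0$ on $K_n$ and $x_0\notin H_w$; this contradiction shows $H_v\neq H_w$, and therefore $F_v\neq F_w$ are two distinct faces of dimension $n-1$.

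I expect the only real obstacle to be exactly this distinctness step: Proposition~\ref{prop:convex} by itself does not prevent the nonnegative and the nonpositive supporting hyperplanes from being the same hyperplane, with $l_v$ a negative scalar multiple of $l_w$, in which case the two candidate faces would be literally the same set. What rules this out is precisely the nonvanishing of the determinant $g(t)$ along the segment, i.e.\ condition~(ii) of strong separation, which converts the incompatible signs of $g$ at the two endpoints into a contradiction. Everything else — that $F_v,F_w$ are faces, the dimension count, the existence of $x_0$ — is immediate.
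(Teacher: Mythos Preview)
Your proof is correct and follows exactly the paper's route: invoke Proposition~\ref{prop:convex} once for a nonnegative $l_v$ and once for a nonpositive $l_w$, and read off two $(n-1)$-faces from the resulting supporting hyperplanes. The paper's own proof actually stops there and does not check that the two faces are distinct; your interpolation argument via $g(t)=l_{p(t)}(x_0)$ fills that gap cleanly, using strong separation exactly where it is needed.
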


\begin{proof}
 Take some $v_i\in K_i$ for $i\in[n]$ such that $l_v$ is nonnegative on every $K_i$. The affine hyperplane defined by $l_v$ supports the convex hull of $K_1,\ldots,K_n$ and is spanned by $v_1,\ldots,v_n$. This gives a face of dimension $n-1$. Another such face is obtained from such a linear form $l_{v'}$ that is nonpositive on all $K_i$.
\end{proof}

\section{Canonically embedded curves}
In this section we let $X\subset\pp^{g-1}$ always denote a non-hyperelliptic (smooth geometrically irreducible projective) curve of genus $g\geq3$ embedded via the canonical embedding.
\begin{Def}
 The \textit{avoidance locus} of $X$ is the set of real hyperplanes in $\pp^{g-1}$ that do not intersect $X(\R)$. It is a subset of the dual projective space $(\pp^{g-1})^\vee$.
\end{Def}
Real points of the avoidance locus correspond to definite differentials having no real zero. Thus it is a direct consequence of Corollary \ref{cor:avoidloc} that the avoidance locus of $X$ has $(2^{s-1}-1+a)$ connected components. Let us take a hyperplane $H$ from the avoidance locus defined by a definite differential $\omega\in\Gamma(X,\Omega_{X/\R})$ and look at the corresponding affine chart $\pp^{g-1}\setminus H$. In this affine chart we have that $X(\R)\subset\R^{g-1}$ is a compact subset. We are interested in the convex hull $K$ of $X(\R)$. The linear polynomials that cut out $K$ correspond to definite differentials with the same sign pattern as our fixed $\omega$. All $(g-2)$-dimensional faces of $K$ correspond to such definite differentials  which have $g-1$ affinely independent intersection points with $X(\R)$. These come from nonvanishing real odd theta characteristics. Therefore, Corollary \ref{cor:countodd} gives us the following upper bound:

\begin{thm}
 The convex hull of a compact canonical curve $X(\R)\subset\R^{g-1}$ of genus $g$ has at most $2^{g-1}$ faces of dimension $g-2$.
\end{thm}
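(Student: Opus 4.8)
The plan is to set up an injective map from the set of $(g-2)$-dimensional faces of $K:=\conv(X(\R))$ to the set of odd theta characteristics counted by Corollary~\ref{cor:countodd}, and then simply read off the bound $2^{g-1}$. To begin, since $X(\R)$ is a compact subset of $\R^{g-1}$ it is disjoint from the hyperplane at infinity of this affine chart; that hyperplane thus lies in the avoidance locus and is cut out by a definite differential $\omega$ with no real zeros, which we normalize so that $\sigma_0(\omega)=1$. Let $\epsilon=(\sigma_1(\omega),\dots,\sigma_{s-1}(\omega))$ be its sign pattern; by Corollary~\ref{cor:avoidloc} we have $\epsilon\neq(1,\dots,1)$ whenever $X$ is of dividing type. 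Since no nonzero holomorphic differential vanishes on the infinite set $X(\R)$, the curve is not contained in any hyperplane of $\pp^{g-1}$, so $X(\R)$ affinely spans $\R^{g-1}$ and $K$ is a genuine convex body.

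Next I would attach a theta characteristic to a $(g-2)$-face $F$ of $K$. Choose a linear polynomial $l_F$ vanishing on the affine hull of $F$ and scaled so that $l_F\geq 0$ on $K$, and let $\omega_F$ be the holomorphic differential corresponding to the projective closure of $\{l_F=0\}$. Because the affine coordinates of the canonical chart are the quotients $\omega_i/\omega$, one has $\omega_F/\omega=l_F$ as rational functions on $X$; this is the identity behind the remark that the linear polynomials cutting out $K$ correspond to definite differentials with the sign pattern of $\omega$. The extreme points of $F$ are extreme points of $K$, hence lie on $X(\R)$, so $F$ meets the curve; since $X(\R)$ sits on the nonnegative side of $l_F$, the curve touches $\{l_F=0\}$ to even order at each such point, so $\omega_F$ is a definite differential, and $\sigma_i(\omega_F)=\operatorname{sign}(l_F|_{X_i})\cdot\sigma_i(\omega)=\sigma_i(\omega)$ shows it has the same sign pattern $\epsilon$ as $\omega$. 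Picking $g-1$ affinely independent extreme points $P_1,\dots,P_{g-1}$ of $F$ — which exist because $\dim F=g-2$ — each is a real zero of $\omega_F$ of even order, so $\operatorname{div}(\omega_F)\geq 2(P_1+\dots+P_{g-1})$; comparing with $\deg\operatorname{div}(\omega_F)=2g-2$ forces equality, so $D_F:=P_1+\dots+P_{g-1}$ is a totally real theta characteristic, and geometric Riemann--Roch gives $l(D_F)=\deg D_F-\dim\langle P_1,\dots,P_{g-1}\rangle=(g-1)-(g-2)=1$ (affinely independent points span a hyperplane of $\pp^{g-1}$), so $D_F$ is odd.

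It then remains to check injectivity and count. From $D_F$ one recovers $\operatorname{div}(\omega_F)=2D_F$, hence $\omega_F$ up to a nonzero real scalar, which the common sign pattern forces to be positive, hence $l_F$ up to a positive scalar, hence the affine hull of $F$; as a facet is the intersection of $K$ with its affine hull, the map $F\mapsto D_F$ is injective. Finally $\omega_F$ is a definite differential with $\sigma_0(\omega_F)=1$, sign pattern $\epsilon$, and $\operatorname{div}(\omega_F)=2D_F$, so $D_F$ is one of the $2^{g-1}$ odd theta characteristics furnished by Corollary~\ref{cor:countodd} for this $\epsilon$ — legitimately, since $\epsilon\neq(1,\dots,1)$ in the dividing case. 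Hence $K$ has at most $2^{g-1}$ faces of dimension $g-2$.

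The step I expect to be the main obstacle is the sign bookkeeping in the middle paragraph: confirming that the differential attached to a facet is genuinely \emph{definite} and carries exactly the same sign pattern as the chart-defining differential $\omega$, rather than merely a related one. This rests on the identity $\omega_F/\omega=l_F$ together with the fact that the orientation-dependent sign of a differential at a real point is the product of that of $\omega$ with the sign of $l_F$ there, so that nonnegativity of $l_F$ on $X(\R)$ is precisely what aligns the two patterns. The remaining ingredients — that $K$ is full-dimensional, that a facet meets $X(\R)$ in exactly $g-1$ affinely independent points, and the Riemann--Roch computation of $l(D_F)$ — are routine.
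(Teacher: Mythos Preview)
Your proof is correct and follows precisely the approach sketched in the paper: identify the hyperplane at infinity with a definite differential $\omega$ of some sign pattern $\epsilon$, associate to each facet a definite differential $\omega_F$ of the same pattern whose half-zero-divisor is an odd theta characteristic, and invoke Corollary~\ref{cor:countodd}. You have simply supplied the details the paper leaves implicit---the verification that $\omega_F$ is definite with pattern $\epsilon$, the degree count forcing $\operatorname{div}(\omega_F)=2D_F$, the geometric Riemann--Roch computation $l(D_F)=1$, and the injectivity of $F\mapsto D_F$.
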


\begin{rem}
 The upper bound from the previous theorem is attained if all real odd theta characteristics are nonvanishing and totally real.
\end{rem}

\begin{ex}
 It follows that the convex hull of a space sextic ($g=4$) can have at most eight $2$-faces. On the other hand the sextic curve constructed in \cite[Example 2.2]{kulkarni2017real} has all its odd theta characteristics totally real, so its convex hull has indeed eight $2$-faces for every suitable affine chart. This answers \cite[Question 11, page 254]{kulkarni2017real}.
\end{ex}

Using the results from the previous section we also obtain some lower bounds.
\begin{thm}
 Let $K$ be the convex hull of a compact canonical curve $X(\R)\subset\R^{g-1}$ of genus $g$. Then we have the following:
 \begin{enumerate}
  \item If $s=g-1$ and $X$ is not of dividing type, then $K$ has at least two faces of dimension $g-2$.
  \item If $s= g$, then $K$ has at least $g$ faces of dimension $g-2$.
 \end{enumerate}
\end{thm}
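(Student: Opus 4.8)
The plan is to connect the combinatorial count of odd theta characteristics from Corollary~\ref{cor:countodd} with the convex-geometric input developed in Section~3. The key translation is the one already spelled out before the upper-bound theorem: in the affine chart determined by a definite differential $\omega$ with no real zeros, a $(g-2)$-face of $K=\conv(X(\R))$ is cut out by a linear polynomial corresponding to a definite differential with the same sign pattern as $\omega$ that meets $X(\R)$ in $g-1$ affinely independent points, and such differentials arise precisely from nonvanishing real odd theta characteristics $D$ with $\operatorname{div}(\omega_D)=2D$. So the task in both cases is: after fixing a suitable sign pattern, produce enough collections of $g-1$ of the components $X_0,\ldots,X_{s-1}$ whose convex hulls are strongly separated (as in the Lemma), apply the Corollary to Proposition~\ref{prop:convex} to each such collection to get a $(g-2)$-face, and check that distinct collections give distinct faces.

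For part~(2), where $s=g$, I would fix the sign pattern all equal to $+1$, which is admissible since $X$ is not of dividing type (a curve with $s=g$ has $r=g+1-s=1>0$, so it is nondividing). There are $s=g$ connected components $X_0,\ldots,X_{g-1}$; choose any $g-1$ of them. First one must argue these can be made strongly separated: the natural move is to replace the affine chart, or rather to observe that after choosing the right definite differential (hence the right hyperplane at infinity) the $g=s$ compact pieces $X_i\subset\R^{g-1}$ are in sufficiently general position that any $g-1$ of their convex hulls are strongly separated in the sense of the Lemma — this is where Corollary~\ref{cor:avoidloc} and the genericity of the construction enter. Granting that, each of the $\binom{g}{g-1}=g$ sub-collections yields, via the Corollary to Proposition~\ref{prop:convex}, a $(g-2)$-face of $K$ spanned by one point from each chosen component. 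Two such faces coming from different $(g-1)$-subsets are distinct because they involve points on different components (a face determines the set of components it touches), so we get at least $g$ faces.

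For part~(1), where $s=g-1$ and $X$ is nondividing, there are exactly $g-1$ components $X_0,\ldots,X_{g-2}$, so there is only one way to pick $g-1$ of them — all of them. Then Proposition~\ref{prop:convex} (its Corollary, "a stool is stable") gives directly at least two $(g-2)$-faces: one from a linear form nonnegative on all $K_i$ and one from a linear form nonpositive on all $K_i$, once we know $K_0,\ldots,K_{g-2}$ are strongly separated. Again the only real content is verifying strong separation in a suitable chart; this should follow as in part~(2), and moreover the two faces are genuinely different faces of $K$ because, by the uniqueness clause of Proposition~\ref{prop:convex}, the nonnegative and nonpositive supporting hyperplanes cannot coincide.

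The main obstacle I anticipate is exactly this strong-separation step: a priori the compact pieces $X_i$ in a given affine chart need not be in the position required by the Lemma, so one has to choose the chart (equivalently, the definite differential with the prescribed sign pattern, using Corollary~\ref{cor:avoidloc}) carefully, and then argue — perhaps by a dimension count on the space of such differentials, or by perturbing within a connected component of the avoidance locus — that a generic such choice makes every relevant collection of convex hulls strongly separated. Once strong separation is in hand, the rest is a bookkeeping application of the convex-geometry corollary together with Corollary~\ref{cor:countodd}, and the counting of distinct faces is routine since a face of $K$ records which components of $X(\R)$ it meets.
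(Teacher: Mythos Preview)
Your overall architecture---establish strong separation of the convex hulls of the components, then invoke the corollary to Proposition~\ref{prop:convex}---is exactly the paper's route. What you are missing is the one observation that makes strong separation immediate, and you are dragging in Corollary~\ref{cor:countodd}, which plays no role in the lower bound (it was used only for the \emph{upper} bound).

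Strong separation needs no genericity, no perturbation, and no special choice of chart: it holds in \emph{every} affine chart coming from the avoidance locus. Use condition~(i) of the Lemma directly. Given any partition $I\sqcup I^c$ of the components $X_0,\ldots,X_{s-1}$, Corollary~\ref{cor:avoidloc} supplies a definite differential $\omega$ with no real zeros whose sign pattern, relative to the fixed $\omega_\infty$, is $+1$ on the $X_i$ with $i\in I$ and $-1$ on the others. The corresponding linear form $\omega/\omega_\infty$ in the given affine chart then separates $\conv(\bigcup_{i\in I}X_i)$ from $\conv(\bigcup_{i\in I^c}X_i)$. The hypothesis that $X$ is non-dividing in part~(1), and the automatic non-dividing in part~(2) (your reasoning ``$r>0$'' is not quite it: what matters is that $r=g+1-s=1$ is \emph{odd}, which is impossible in the dividing case where $H$ is built from $2\times2$ blocks), guarantee that every sign pattern is realizable, so every partition is separable. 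That is the entire content of the paper's sentence ``It follows from Corollary~\ref{cor:avoidloc} that the convex hulls of the components of $X(\R)$ are strongly separated.''

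Once this is in hand, part~(1) is literally the corollary to Proposition~\ref{prop:convex} applied to all $g-1$ components. For part~(2), each choice of $g-1$ of the $g$ components yields the two hyperplanes of Proposition~\ref{prop:convex}; strong separation of all $g$ components forces the omitted component to lie strictly on one side of any hyperplane through points of the other $g-1$, so exactly one of the two supports $K$, giving one $(g-2)$-face per choice. Distinctness follows, as you say, because each such face meets only the $g-1$ chosen components.
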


\begin{proof}
 It follows from Corollary \ref{cor:avoidloc} that the convex hulls the components of $X(\R)$ are strongly separated. Thanks to Corollary \ref{cor:thx}, for $s=g-1$ we get at least two $(g-2)$-faces. For $s=g$ we get one $(g-2)$-face of $K$ for each choice of $g-1$ components.
\end{proof}

In the case of $M$-curves, i.e. $s=g+1$, the situation is slightly more complicated.

\begin{thm}
 Let $K$ be the convex hull of a compact canonical curve $X(\R)\subset\R^{g-1}$ of genus $g$ with $s=g+1$ components (thus $X$ is of dividing type). Let $k$ be the number of ovals where the orientation induced by the differential $\omega_\infty$ corresponding to the hyperplane at infinity agrees with the complex orientation. Then the number of $(g-2)$-faces is at least $k\cdot(g+1-k)$. In particular, we always have at least $g$ such faces.
\end{thm}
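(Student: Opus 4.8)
The plan is to produce, for each of the $k(g+1-k)$ pairs $\{i,j\}$ of ovals whose $\omega_\infty$-orientations disagree (signs taken with respect to the complex orientation, so that $k=\#\{l:\sigma_l(\omega_\infty)=1\}$), one $(g-2)$-face of $K$, arranging that this face is tangent to $X(\R)$ in exactly the $g-1$ ovals other than $X_i$ and $X_j$; faces coming from distinct pairs are then automatically distinct, which gives the bound. Since $\omega_\infty$ itself is a definite differential without real zeros, its sign pattern is not the forbidden one, so $1\leq k\leq g$ and hence $k(g+1-k)\geq g$, which yields the last assertion.

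I would begin from the dictionary in the discussion preceding the theorem: a $(g-2)$-face of $K$ is cut out by a hyperplane $H$ with $H\cap X(\R)$ consisting of $g-1$ affinely independent points, in which case $H\cdot X=2D$ for an odd theta characteristic $D$ with $l(D)=1$, and the differential $\omega_H$ attached to the defining form of $H$ is definite with the same sign pattern as $\omega_\infty$ up to a global sign (the defining form has even order of vanishing along $X(\R)$, hence constant sign on each oval, and these signs must agree because $H$ supports $K$); conversely every such $D$ gives a $(g-2)$-face. So it suffices to exhibit, for each admissible pair $\{i,j\}$, a definite differential with $g-1$ affinely independent real zeros lying on the ovals $\cS:=\{X_l:\ l\neq i,j\}$ and with the sign pattern of $\omega_\infty$.

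Fix such a pair and consider $\cS$. By Corollary \ref{cor:avoidloc} together with the Lemma of Section 3, any proper subfamily of the ovals is strongly separated: the only obstruction to strong separation of all $g+1$ ovals is the partition into the $k$ positively and the $g+1-k$ negatively oriented ones, which is precisely the all-equal sign pattern excluded for dividing curves. Hence the $g-1$ convex bodies $\{\conv(X_l):l\neq i,j\}\subset\R^{g-1}$ are strongly separated, and Proposition \ref{prop:convex} furnishes a hyperplane $H^{+}$ supporting $\conv(\bigcup_{l\neq i,j}X_l)$, tangent to each of these ovals, whose defining form is nonnegative on all of them; the nonpositive alternative gives a second hyperplane $H^{-}$. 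Counting degrees on the canonical curve forces the hyperplane section of $H^{\pm}$ to equal $2\sum_{l\neq i,j}P_l^{\pm}$ with $P_l^{\pm}\in X_l$; in particular $H^{\pm}$ meets neither $X_i$ nor $X_j$, the associated differentials $\omega^{\pm}$ are definite with $\sigma_l(\omega^{+})=\sigma_l(\omega_\infty)$ for $l\neq i,j$ (and $\sigma_l(\omega^{-})=-\sigma_l(\omega_\infty)$), and $H^{+}$ (resp.\ $H^{-}$) is a $(g-2)$-face of $K$ of the desired type precisely when $\sigma_i(\omega^{+})=\sigma_i(\omega_\infty)$ and $\sigma_j(\omega^{+})=\sigma_j(\omega_\infty)$ (resp.\ both signs reversed). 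Equivalently, $H^{+}$ and $H^{-}$ both fail exactly when one of $X_i,X_j$ lies beyond $H^{+}$ and the other beyond $H^{-}$, on the sides away from $\conv(\bigcup_{l\neq i,j}X_l)$.

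The heart of the matter is to rule this configuration out when $\sigma_i(\omega_\infty)\neq\sigma_j(\omega_\infty)$; I expect this to be the main obstacle. The guiding principle is that the orientation partition is a Radon partition of the oval family: by the separation analysis above $\conv(X_l:\sigma_l(\omega_\infty)=1)$ and $\conv(X_l:\sigma_l(\omega_\infty)=-1)$ meet, so there are $v_l\in X_l$ and positive weights $\lambda_l$ with $\sum_l\sigma_l(\omega_\infty)\lambda_l v_l=0$ and $\sum_l\sigma_l(\omega_\infty)\lambda_l=0$; applying a linear form that vanishes on $\{v_l:l\neq i,j\}$ to this circuit shows that $v_i$ and $v_j$ lie on the same side of the hyperplane they span together with those points exactly when $\sigma_i(\omega_\infty)\neq\sigma_j(\omega_\infty)$. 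Upgrading this from the auxiliary points $v_i,v_j$ and the hyperplane they happen to span to the full ovals $X_i,X_j$ and the two canonical supporting hyperplanes $H^{\pm}$ of $\conv(\bigcup_{l\neq i,j}X_l)$ is exactly what remains. For $k=1$ or $k=g$ the ovals $\cS$ carry a single orientation, and then Vinnikov's sign theorem (the all-equal pattern being forbidden) already excludes the bad configuration, so the construction goes through; in the intermediate range one needs the extra leverage of a constraint on the complex orientations of ovals sharing an empty common tangent hyperplane — of Fiedler–Rokhlin type — or a continuity argument along the pencil of hyperplanes tangent to $g-2$ of the ovals of $\cS$, to finish.
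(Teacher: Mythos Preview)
Your overall architecture matches the paper's: for each pair $\{i,j\}$ with $\sigma_i(\omega_\infty)\neq\sigma_j(\omega_\infty)$ you apply Proposition~\ref{prop:convex} to the remaining $g-1$ ovals to get the two supporting hyperplanes $H^{\pm}$ of $K'=\conv(\bigcup_{l\neq i,j}X_l)$, and you correctly argue (via degree count and strong separation of any $g$ ovals) that $H^{\pm}$ miss $X_i\cup X_j$, so the only question is on which side these two ovals lie. Your count $k(g+1-k)\geq g$ and the distinctness of the resulting faces are also fine.

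The genuine gap is exactly where you flag it: you do not prove that at least one of $H^{\pm}$ has $X_i$ and $X_j$ on the correct side. Your Radon-circuit observation is correct for the particular points $v_l$ realizing the intersection of the two orientation classes, but the hyperplane through $\{v_l:l\neq i,j\}$ has no reason to be $H^{+}$ or $H^{-}$, and your suggested Fiedler--Rokhlin or continuity upgrade is left unproved. The paper closes this gap by using Proposition~\ref{prop:vincomp} in full strength, not only in the extreme cases $k\in\{1,g\}$. The point you are missing is this: since $\sigma_i(\omega_\infty)\neq\sigma_j(\omega_\infty)$, \emph{every} sign pattern that agrees with $\omega_\infty$ on $X_i$ and $X_j$ already contains both values relative to the complex orientation and is therefore admissible. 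Hence for \emph{each} $\epsilon\in\{\pm1\}^{g-1}$ there is a linear form $l_\epsilon$ which is nonnegative on $X_i\cup X_j$ and has sign $\epsilon_m$ on $X_{l_m}$ for the selected ovals. These $l_\epsilon$ span a convex cone of forms nonnegative on $X_i\cup X_j$, and a positive combination of them can be made to vanish at a point of each selected oval. That produces a hyperplane meeting all of $X_{l_1},\ldots,X_{l_{g-1}}$ with both $X_i,X_j$ on its nonnegative side; combined with the fact (which you already have) that no hyperplane meeting all $g-1$ selected ovals can meet $X_i$ or $X_j$, this forces one of the two faces from Proposition~\ref{prop:convex} to support $K$.

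In short, the resolution does not require any new topological input of Fiedler--Rokhlin type; it is a second, more careful application of Vinnikov's sign theorem, exploiting that fixing two opposite signs already kills the forbidden pattern.
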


\begin{figure}[h]
 \includegraphics[width=5cm]{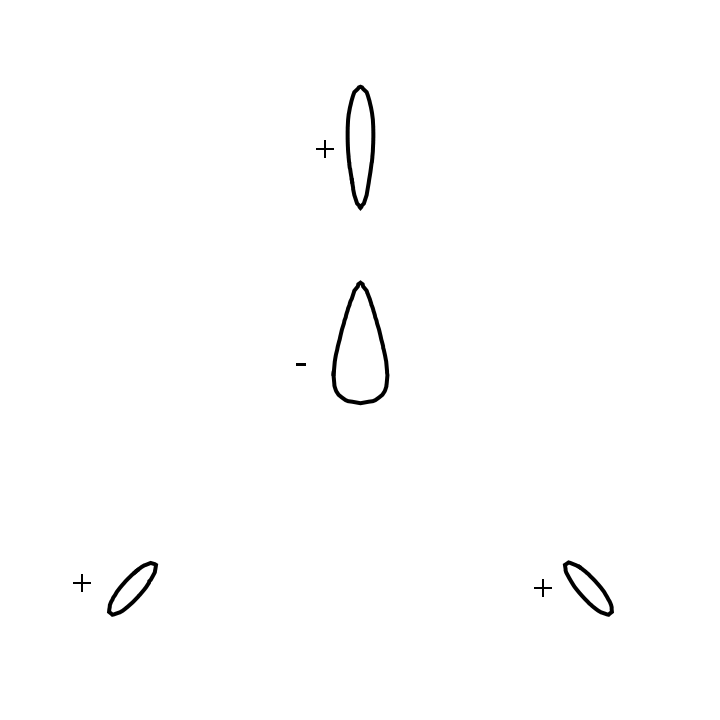} \quad
 \includegraphics[width=5cm]{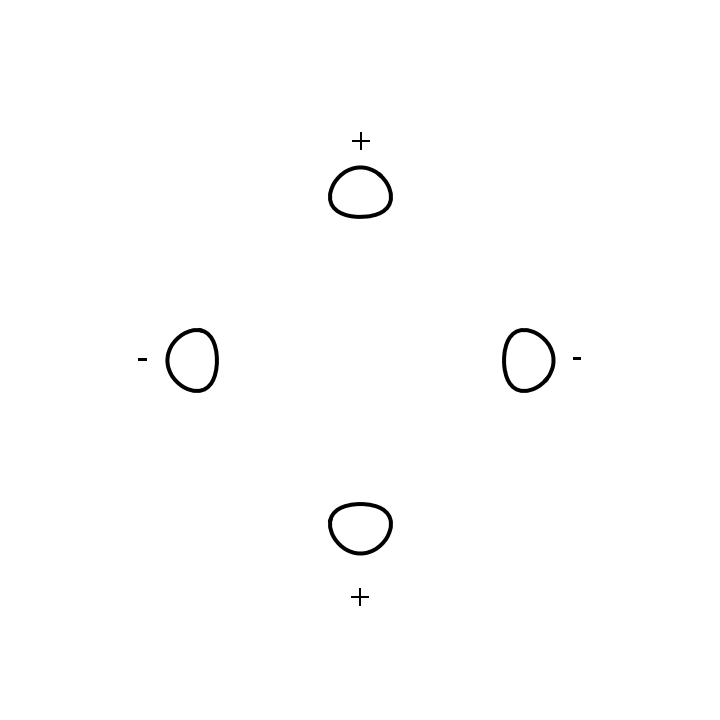}
\caption{Two canonically embedded curves of genus three: The signs indicate whether the orientation induced by the hyperplane at infinity does or does not agree with the complex orientation.}
\label{fig:quarticplanecurves}
\end{figure}

\begin{proof}
 For every choice $X_{i_1},\ldots,X_{i_{g-1}}$ of $g-1$ ovals we obtain at least two $(g-2)$-faces of their convex hull $K'$ from Proposition \ref{prop:convex}. We claim that one of these is a face of $K$ if and only if $\omega_\infty$ induces the complex orientation on one of the remaining ovals and not on the other. This would imply our claim.
 
 First assume that one of these two faces is a face of $K$ as well. The affine span of each choice of points $x_1,\ldots,x_{g-1}$ such that $x_j\in X_{i_j}$ does not intersect any of the two remaining ovals $X_{i_{g}}$ and $X_{i_{g+1}}$. Therefore, a linear polynomial defining such an affine hyperplane is always nonnegative (or nonpositive) on the remaining two ovals. Applying Proposition \ref{prop:convex} to some other affine charts then shows that there are definite differentials inducing the same orientation on $X_{i_{g}}$ and $X_{i_{g+1}}$ as $\omega_\infty$ and every other combination of orientations on $X_{i_1},\ldots,X_{i_{g-1}}$. But this can only be the case if $\omega_\infty$ does not agree with the complex orientation on both $X_{i_{g}}$ and $X_{i_{g+1}}$ by Proposition \ref{prop:vincomp}.
 
 Now assume that $\omega_\infty$ induces the complex orientation on one of the remaining ovals and not on the other. Then by Proposition \ref{prop:vincomp} there is a linear polynomial $l_\epsilon$ for every choice of signs $\epsilon_1,\ldots,\epsilon_{g-1}\in\{\pm 1\}$ that is nonnegative on $X_{i_{g}}$ and $X_{i_{g+1}}$ such that $\epsilon_j l_\epsilon$ is nonnegative on $X_{i_j}$. From these we can construct a linear polynomial that vanishes on at least one point of each $X_{i_1},\ldots,X_{i_{g-1}}$ and is nonnegative on $X_{i_{g}}$ and $X_{i_{g+1}}$. But this implies that one of the two $(g-2)$-faces of $K'$ obtained from Proposition \ref{prop:convex} is actually a face of $K$ as well since no affine hyperplane that intersects all $X_{i_1},\ldots,X_{i_{g-1}}$ can intersect $X_{i_{g}}$ or $X_{i_{g+1}}$.
\end{proof}

Summing up the lower bounds from the two preceding theorems over all components of the avoidance locus we obtain the following lower bound for totally real theta characteristics.

\begin{cor}
 Assume that $X$ is not of dividing type or that $X$ is an $M$-curve. Then there are at least $\binom{s}{g-1}\cdot 2^{g-1}$ totally real odd theta characteristics.
\end{cor}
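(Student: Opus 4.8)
The plan is to sum, over the connected components of the avoidance locus, the lower bounds on the number of $(g-2)$-faces of $\conv(X(\R))$ supplied by the preceding theorems, and to turn the result into a lower bound for totally real odd theta characteristics by the same face--theta correspondence used above. The one point requiring care, as opposed to bookkeeping, is that no totally real odd theta characteristic be counted twice.

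First I would record that the sign pattern $(\sigma_0,\dots,\sigma_{s-1})$ of a definite differential with no real zeros is locally constant on the avoidance locus, so that, after normalizing $\sigma_0=1$, the components of the avoidance locus are indexed by the realizable sign patterns $\epsilon=(\epsilon_1,\dots,\epsilon_{s-1})$, of which there are $2^{s-1}-1+a$ (Corollary \ref{cor:avoidloc}). For each such $\epsilon$ I would fix a hyperplane $H_\epsilon$ in the corresponding component, defined by a definite differential $\omega_\epsilon$ of sign pattern $\epsilon$ with no real zeros, and pass to the chart $\pp^{g-1}\setminus H_\epsilon$, in which $K_\epsilon:=\conv(X(\R))\subset\R^{g-1}$ is a full-dimensional convex body. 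As explained earlier in this section, each $(g-2)$-face of $K_\epsilon$ is cut out by a definite differential $\omega$ with $\textnormal{div}(\omega)=2M$ for a totally real odd theta characteristic $M$ with $l(M)=1$ and sign pattern $\epsilon$. Distinct $(g-2)$-faces, being facets of $K_\epsilon$, have distinct supporting hyperplanes, hence give non-proportional $\omega$ and so distinct effective divisors $D$ with $2D=\textnormal{div}(\omega)$; since $l(M)=1$ these divisors lie in pairwise distinct linear systems, so the theta characteristics attached to distinct faces are distinct. Moreover, when $l(M)=1$ the differential $\omega$, hence $\epsilon$, is determined by $M$, so faces in charts attached to distinct $\epsilon$ yield distinct theta characteristics as well. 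Therefore
$$\#\{\text{totally real odd theta characteristics}\}\ \geq\ \sum_{\epsilon}f(\epsilon),$$
the sum being over realizable sign patterns, with $f(\epsilon)$ the number of $(g-2)$-faces of $K_\epsilon$.

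It then remains to bound $\sum_\epsilon f(\epsilon)$ from below, which splits by the value of $s$ (the asserted bound is vacuous for $s<g-1$). If $s=g-1$ and $X$ is not of dividing type, there are $2^{g-2}$ charts and the preceding theorem gives $f(\epsilon)\geq2$ in each, so $\sum_\epsilon f(\epsilon)\geq2^{g-1}=\binom{g-1}{g-1}2^{g-1}$. If $s=g$ and $X$ is not of dividing type, there are $2^{g-1}$ charts and it gives $f(\epsilon)\geq g$ in each, so $\sum_\epsilon f(\epsilon)\geq g\cdot2^{g-1}=\binom{g}{g-1}2^{g-1}$. If $s=g+1$, so that $X$ is an $M$-curve and hence of dividing type, the charts correspond to the sign patterns in $\{\pm1\}^{g}$ other than the all-ones one, and the $M$-curve theorem gives $f(\epsilon)\geq k_\epsilon(g+1-k_\epsilon)$, where $k_\epsilon=1+\#\{i:\epsilon_i=1\}\in\{1,\dots,g\}$ is the number of ovals on which $\omega_\epsilon$ induces the complex orientation. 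Since the missing all-ones pattern would contribute $k(g+1-k)$ with $k=g+1$, i.e.\ zero, I may extend the sum over all of $\{\pm1\}^{g}$ and compute, with $j=\#\{i:\epsilon_i=1\}$,
$$\sum_{\epsilon}f(\epsilon)\ \geq\ \sum_{j=0}^{g}\binom{g}{j}(j+1)(g-j)\ =\ g(g-1)2^{g-2}+g\cdot2^{g-1}\ =\ g(g+1)2^{g-2}\ =\ \binom{g+1}{g-1}2^{g-1},$$
using $\sum_j\binom{g}{j}j(g-j)=g(g-1)2^{g-2}$ and $\sum_j\binom{g}{j}(g-j)=g\cdot2^{g-1}$.

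I expect the only genuinely delicate step to be the claim that a totally real odd theta characteristic $M$ with $l(M)=1$ determines its sign pattern, which is what guarantees that the sums over different charts do not overlap; it rests on the uniqueness up to scalars of the differential with divisor $2M$ when $l(M)=1$, and it also pins down the hypotheses: for $s=g-1,g$ we need every sign pattern to be realizable, so $X$ must not be of dividing type, while for $s=g+1$ no hypothesis beyond being an $M$-curve is needed because the single non-realizable pattern contributes $k(g+1-k)=0$ to the sum. Everything else is an appeal to the preceding theorems together with the elementary binomial identities above.
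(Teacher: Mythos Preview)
Your proof is correct and follows exactly the approach the paper indicates: the paper's entire argument is the single sentence ``Summing up the lower bounds from the two preceding theorems over all components of the avoidance locus we obtain the following lower bound,'' and you carry out precisely this summation. Your treatment is more careful than the paper's in two respects: you make explicit why no theta characteristic is counted twice (via the fact, stated in the introduction and forced by geometric Riemann--Roch, that a $(g-2)$-face yields $M$ with $l(M)=1$, so $M$ determines both the face and the sign pattern $\epsilon$), and you spell out the binomial computation $\sum_{j}\binom{g}{j}(j+1)(g-j)=\binom{g+1}{2}2^{g-1}$ for the $M$-curve case, which the paper leaves implicit.
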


\begin{rem}
 In this section we have only considered non-hyperelliptic curves since the convexity questions do not make sense otherwise. However the task of calculating totally real theta characteristics still makes sense and is in fact not hard to carry out. Indeed, let $X$ be a hyperelliptic curve of genus $g\geq2$ with $X(\R)\neq\emptyset$. Then there is a $2$-to-$1$ map $\pi$ from $X$ to the projective line defined over the reals \cite[Prop. 6.1]{grossharris}. We can therefore represent $X$ by the equation $$y^2=\prod_{i=1}^{2r} (x-p_i) \cdot \prod_{i=r+1}^{g+1-r}(x-q_i)(x-\overline{q_i})$$ where the $p_i$ are real and the $q_i$ are non-real complex numbers and $0\leq r\leq g+1$. If $1\leq r\leq g$, then $X(\R)$ has $s=r$ components and $X$ is not of dividing type. If $r=g+1$, then $X$ is an $M$-curve and thus of dividing type. If $r=0$, then $X$ is of dividing type with $s=1$ or $s=2$ depending on the parity of $g$. Now let $D$ be a divisor on $X$ from the linear system associated with $\pi$. Clearly, we can choose $D$ to be totally real. The theta characteristics of $X_\C$ can all be written as $$M=(k-1)\cdot D+E$$ where $0\leq k\leq\frac{g+1}{2}$ and $E$ is a formal sum of elements in a subset of the $2g+2$ branch points of $\pi$. Furthermore, one has $l(M)=k$. It follows that $$\sum_{l=0}^{\left \lfloor{\frac{g-1}{4}}\right \rfloor}\binom{2r}{g-1-4l}$$ is the number of totally real odd theta characteristics of $X$. In particular, if $g$ is relatively large compared to $r$,  there are no totally real odd theta characteristics.
\end{rem}

We conclude with an examination of the tightness of our bounds.

\begin{ex}
 All our bounds, upper and lower, are attained in the case of canonically embedded curves of genus three, i.e. plane quartic curves. In Table \ref{tab:quartic} we list these bounds for the number of totally real theta characteristics and the number of edges of (the convex hull of) a canonically embedded curve of genus three for all combinations of $s$, i.e. the number of ovals, and $a$, i.e. whether the curve is of dividing type or not. In the last column we give explicit examples realizing these bounds.
\end{ex}

\begin{table}[h]
\begin{tabular}{|c|c|c|c|c|}
\hline
 $s$ & $a$ & T & E & Examples \\ \hline
 1&1&0&0& $1 - x^4 - y^4$\\ &&4&4& $1 + 12 x^2 - 10 x^4 + 12 y^2 - 101 x^2 y^2 - 10 y^4$ \\ \hline
 2&1&4&2&$9 - 10 x^2 + 2 x^4 + 6 y^2 + 2 x^2 y^2 + 2 y^4$ \\ &&8&4&$15 - 92 x^2 + 80 x^4 + 8 x y - 92 y^2 + 416 x^2 y^2 + 80 y^4$ \\ \hline
 2&0&0&0&$201 - 300 x^2 + 101 x^4 - 300 y^2 + 200 x^2 y^2 + 101 y^4$ \\ &&4&4&$3 - 24 x^2 + 19 x^4 - 24 y^2 + 104 x^2 y^2 + 19 y^4$ \\ \hline
 3&1&12&3&$39 - 155 x^2 + 200 x^4 + 10 x y - 20 x^2 y - 155 y^2 - 40 x y^2$\\&&&&$ + 
 260 x^2 y^2 - 20 y^3 + 40 x y^3 + 220 y^4$ \\ &&16&4&$101 + 2 x - 600 x^2 + 500 x^4 + 2 y - 600 y^2 + 2600 x^2 y^2 + 500 y^4$ \\ \hline
 4&0&24&3& $-1 + 80 x^2 + 80 x^4 + 240 x^2 y + 20 y^2 - 40 y^3 + 20 y^4$\\ &&28&4&$101 - 600 x^2 + 501 x^4 - 600 y^2 + 2600 x^2 y^2 + 501 y^4$ \\ \hline
\end{tabular}
\caption{Examples realizing our derived lower and upper bounds for the number of totally real theta characteristics (T) and edges (E) of the convex hull.}\label{tab:quartic}
\end{table}

\begin{ex}
 In the case of canonically embedded curves of genus four, i.e. space sextics, all of our bounds on the number of totally real theta characteristics turn out to be tight, except for the lower bound for $M$-curves. In Table \ref{tab:sextic} we list these bounds and compare them with the numbers from \cite[Table 1]{kulkarni2017real} and \cite[Table 1]{hauentri}. %Note that the construction method from \cite{kulkarni2017real} produces only $M$-curves and curves that are not of dividing type. Thus the types $a=0$ and $s=1,3$ do not appear in \cite[Table 1]{kulkarni2017real}. However, for $a=0$ and $s=1$ there are no real odd theta characteristics at all, so any such curve realizes the bounds. In \cite[Figure 2]{kulkarni2017real} there is an example for $a=0$, $s=3$ and no totally real theta characteristic. Except for this case, we see that all our bounds are at least very close to being tight. 
 Furthermore, one can extract from \cite[Table 1]{kulkarni2017real} and \cite[Table 1]{hauentri} that all our bounds on the number of $2$-faces of the convex hull of a space sextic are realized. For example if we take a space sextic with $s=5$, $a=0$ and only $84$ totally real theta characteristics, then there will be a connected component of the avoidance locus that contains only four totally real theta characteristics. Indeed, there are ten components that contain at least six totally real theta characteristics, so the remaining five components cannot all contain more than four totally real theta characteristics. Taking the hyperplane at infinity from such a component gives a compact curve in $\R^3$ whose convex hull has only four $2$-faces. In all the other cases we can argue analogously.
\end{ex}
 
\begin{table}[h]
\begin{tabular}{|c|c|c|c|c|}
\hline
 $s$ & $a$ & Our bounds  & Realized in \cite{kulkarni2017real} or \cite{hauentri} \\ \hline
 1&0&[0,0]&[0,0] \\ 
 1&1&[0,8]&[0,8]\\ 
 2&1&[0,16]&[0,16]\\ 
 3&0&[0,24]&[0,24]\\ 
 3&1&[8,32]&[8,32]\\ 
 4&1&[32,64]&[32,64]\\ 
 5&0&[\textbf{80},120]&[\textbf{84},120]\\  \hline
\end{tabular}
\caption{Comparing our bounds on the number of totally real theta characteristics with the numbers realized in \cite{kulkarni2017real} or \cite{hauentri}.}\label{tab:sextic}
\end{table}

\bigskip

 \noindent \textbf{Acknowledgements.}
I would like to thank all the participants of the ``Tritangent Summit'' that took place at the Max Planck Institute for Mathematics in the Sciences in Leipzig in January 2018. Furthermore, I want to thank the diligent referee who pointed out some inaccuracies and gaps and even helped to eliminate them.

\bibliographystyle{alpha}
\bibliography{biblio}

\def\cprime{$'$}
\begin{thebibliography}{KRSNS18}

\bibitem[Emc28]{emch}
Arnold Emch.
\newblock Mathematical models.
\newblock {\em Univ. of Illinois Bull.}, XXV(43):5--38, 1928.

\bibitem[GH81]{grossharris}
Benedict~H. Gross and Joe Harris.
\newblock Real algebraic curves.
\newblock {\em Ann. Sci. \'Ecole Norm. Sup. (4)}, 14(2):157--182, 1981.

\bibitem[HKSS18]{hauentri}
Jonathan~D. Hauenstein, Avinash Kulkarni, Emre~C. Sert\"{o}z, and Samantha~N.
  Sherman.
\newblock Certifying reality of projections.
\newblock In {\em Mathematical software---{ICMS} 2018}, volume 10931 of {\em
  Lecture Notes in Comput. Sci.}, pages 200--208. Springer, Cham, 2018.

\bibitem[HL18]{corey}
Corey Harris and Yoav Len.
\newblock Tritangent planes to space sextics: the algebraic and tropical
  stories.
\newblock In {\em Combinatorial Algebraic Geometry}, pages 47--63. Fields Inst.
  Res. Math. Sci., 2018.

\bibitem[KRSNS18]{kulkarni2017real}
Avinash Kulkarni, Yue Ren, Mahsa Sayyary~Namin, and Bernd Sturmfels.
\newblock Real space sextics and their tritangents.
\newblock In {\em I{SSAC}'18---{P}roceedings of the 2018 {ACM} {I}nternational
  {S}ymposium on {S}ymbolic and {A}lgebraic {C}omputation}, pages 247--254.
  ACM, New York, 2018.

\bibitem[Rad21]{Radon}
Johann Radon.
\newblock Mengen konvexer {K}\"orper, die einen gemeinsamen {P}unkt enthalten.
\newblock {\em Math. Ann.}, 83(1-2):113--115, 1921.

\bibitem[Sch00]{scheider}
Claus Scheiderer.
\newblock Sums of squares of regular functions on real algebraic varieties.
\newblock {\em Trans. Amer. Math. Soc.}, 352(3):1039--1069, 2000.

\bibitem[Vin93]{Vin93}
Victor Vinnikov.
\newblock Selfadjoint determinantal representations of real plane curves.
\newblock {\em Math. Ann.}, 296(3):453--479, 1993.

\end{thebibliography}
 \end{document}